\gdef\No{{\select@language{russian}\textnumero}}
\numberwithin{equation}{section}
\theoremstyle{plain}
\newtheorem{thm}{Теорема}
\newtheorem{lem}{Лемма}
\newtheorem{prop}{Предложение}
\theoremstyle{definition}
\newtheorem{defn}{Определение}
\newtheorem{ex}{Пример}
\newtheorem{proof}{Доказательство}
\begin{document}
\title{Признаки неразрешимости некоторых диофантовых уравнений $n$-ой степени}

\author[Eteri~Samsonadze]{Этери~Самсонадзе}
\address{Тбилисский Государственный Университет, 
пр.\,Чавчавадзе\,~1.}
\email{etsamsonadze@gmail.com}

\date{}
\udk{511.52}
\maketitle
\begin{fulltext}
\begin{abstract}
Получены признаки неразрешимости в целых неотрицательных числах диофантова уравнения $\sum_{i=1}^{m}x_i^{n}=bc^{n}$, где $n, m \geq 2$, $b, c\in \mathbb{N}$ и каноническое разложение числа $c$ состоит из степеней простых чисел $p_i$, удовлетворяющих условию $\varphi(p_i^{k_i})\mid n$ ($p_i^{k_i
}\geq 3)$ при некоторых натуральных числах $k_i$ $(i=1,2,\ldots ,l)$; $\varphi(x)$ -- функция Эйлера. Кроме того, доказано, что если $b< m< p_i^{k_i}$ $(i=1,2,\ldots ,l)$, то это уравнение не имеет решений с натуральными компонентами $x_1,x_2,\ldots ,x_m$. Также, элементарным методом, доказано, что диофантово уравнение $x_1^n+x_2^n=(p^{s} p_1^{s_1} p_2^{s_2}\ldots p_l^{s_l})^{n}$ не имеет решений с натуральными компонентами, если $n\geq 3$, $p$ --любое простое число, $p_i$ -- простое число, удовлетворяющее условию $\varphi(p_i^{k_i})\mid n$ $(p_i^{k_i}\geq 3)$ при каком-либо натуральном $k_i$; $s$, $s_i$ -- целые неотрицательные числа $(i=1,2,..,l)$.
\end{abstract}

\begin{keywords}
диофантово уравнение; функция Эйлера; неразрешимость уравнения.
\end{keywords}

\markright{Признаки неразрешимости диофантовых уравнений}

\section{Введение}\label{intro}

Данная работа посвящена признакам неразрешимости в целых неотрицательных числах диофантовых уравнений
\begin{equation}\label{eq-1-1}
    \sum_{i=1}^{m}x_i^{n}=b
\end{equation}
и
\begin{equation}\label{eq-1-2}
    \sum_{i=1}^{m}x_i^{n}=bc^n,
\end{equation}
где $n,m \geq 2$, $b$ -- целое неотрицательное число, $c\in \mathbb{N}$. 

Под решением диофантова уравнения $f(x_1,x_2,\ldots ,x_m)=0$ будем подразумевать целое неотрицательное решение (т. е. такое решение 
 $(x_1,x_2,\ldots ,x_m)$, что $x_1,x_2,\ldots ,x_m$ целые неотрицательные числа), а под натуральным решением -- такое $(x_1,x_2,\ldots ,x_m)$ решение, что $x_1,x_2,\ldots ,x_m\in \mathbb{N}$ . 

Через $P\left(f(x_1,x_2,\ldots ,x_m)=d\right)$ будем обозначать число  целых неотрицательных решений уравнения $f(x_1,x_2,\ldots ,x_m)=d$.

Очевидно, что если $(x_1,x_2,\ldots ,x_m)$ является решением уравнения (\ref{eq-1-1}), то $(cx_1,cx_2,\ldots ,cx_m)$, где $c\in \mathbb{N}$, будет решением уравнения (\ref{eq-1-2}). Однако, уравнение (\ref{eq-1-2}) может иметь и другие решения, которые невозможно получить таким путем. Например, $(3,4)$ является решением уравнения $x_1^2+x_2^2=5^2$, но его невозможно получить из решения уравнения $x_1^2+x_2^2=1$ умножением компонентов решения на натуральное число. Таким образом, при любом целом неотрицательном $b$ и $c\in \mathbb{N}$ имеем:
\[P\left(\sum_{i=1}^{m}x_i^n=bc^n\right)\geq P\left(\sum_{i=1}^{m}x_i^n=b\right).\]

Статья состоит из трех частей. В первой части приводятся достаточные условия, при которых число целых неотрицательных решений уравнения (\ref{eq-1-2}) равно числу целых неотрицательных решений уравнения (\ref{eq-1-1}). Для этого вводится понятие $\varphi$-делителя числа $n$ следующим образом: простое число $p$ называется $\varphi$-делителем числа $n$, если существует такое натуральное число $k$, при котором удовлетворяется условие $\varphi(p^k)\mid n$, где $p^k\geq 3$, $\varphi(x)$ -- функция Эйлера; при этом наибольшее из возможных значений $k$, удовлетворяющих этому условию, называется степенью $\varphi$-делителя (очевидно, что при этом $n$  является четным числом). Нами доказано, что если $p_i$ является $\varphi$-делителем степени $k_i$ числа $n$ и $m<p_i^{k_i}$ $(i=1,2,\ldots ,l)$, то при любом целом неотрицательном $b$ имеем
\[P\left(\sum_{i=1}^{m}x_i^n=b(d_{p_1p_2\ldots p_l})^{n}\right)=P\left(\sum_{i=1}^{m}x_i^n=b\right),\]

\noindent где через $d_{p_1p_2\ldots p_l}$ обозначено любое натуральное число, каноническое разложение которого состоит из степеней простых чисел $p_1, p_2,\ldots , p_l$.

Во второй части работы приводятся некоторые признаки неразрешимости уравнения (\ref{eq-1-1}). Используя полученные результаты, в третьей части работы приведены признаки неразрешимости уравнения (\ref{eq-1-2}). Также доказано, что если $c=d_{p_1p_2\ldots p_l}$, где $p_i$ являются $\varphi$-делителями $k_i$-той степени числа $n$ и $b<m<p_i^{k_i}$ $(i=1,2,\ldots ,l)$, то уравнение (\ref{eq-1-2}) не имеет натуральных решений. Отсюда, в частности, следует, что уравнение 
\[x_1^{2n}+x_2^{2n}=(2^{s_1} \cdot 3^{s_2})^{2n},\]
\noindent где $n\in \mathbb{N}$,  $s_1,s_2$ -- целые неотрицательные числа, не имеет натуральных решений. Также, элементарным методом, доказано, что уравнение 
\[x_1^{n}+x_2^{n}=(p^s)^n,\]
где $n\geq 3$, $p$ -- любое простое число, $s\in \mathbb{N}$, не имеет натуральных решений. Отсюда получено, что уравнение  
\[x_1^{2n}+x_2^{2n}=(d_{p,p_1,p_2,\ldots ,p_l})^{2n},\]
\noindent где $n>1$ $(n\in \mathbb{N})$, $p$ -- любое простое число, $p_i$ -- $\varphi$-делитель любой степени числа $2n$ $(i=1,2,\ldots ,l)$, не имеет натуральных решений. В частности, не имеет натуральных решений уравнение
\[x_1^{2n}+x_2^{2n}=(2^{s_1} \cdot 3^{s_2}\cdot p^s)^{2n},\]
где $n>1$ $(n\in \mathbb{N})$, $s, s_1,s_2$ -- целые неотрицательные числа,  $p$ -- любое простое число.

Все результаты получены элементарным методом. В основном применяется элементарная теория чисел, свойства функций Кронекера и Эйлера.

\section{Условия равенства чисел целых неотрицательных решений уравнений  $\sum_{i=1}^{m}x_i^{n}=bc^n$ и  $\sum_{i=1}^{m}x_i^{n}=b$}

В данном параграфе рассматриваются диофантовые уравнения \[\sum_{i=1}^{m}x_i^{n}=b\] и \[\sum_{i=1}^{m}x_i^{n}=bc^n,\] где $n, m\geq 2$, $c\in \mathbb{N}$, $b$, $x_1, x_2,\ldots ,x_m$ -- целые неотрицательные числа. Число решений уравнения \[\sum_{i=1}^{m}x_i^{n}=d\] иногда, для краткоси, будем обозначать через $P_{m}^n(d)$ (вместо $P\left(\sum_{i=1}^{m}x_i^{n}=d\right)$). Биномиальные коэффициенты будем обозначать через $C_n^j$.

\begin{lem}\label{lem-2-1}
  При любых натуральных $n, m, l\geq 2$, $s$ и любом целом неотрицательном $b$ имеем

\begin{equation}\label{eq-2-1}
    P\left(\sum_{i=1}^{m}x_i^{n}=b(l^{n})^{s}\right)=\sum_{t_1=0}^{l-1}\sum_{t_2=0}^{l-1}\ldots \sum_{t_m=0}^{l-1}P\left(\sum_{i=1}^{m}x_i^{n}+\frac{M+\sum_{i=1}^{m}t_i^n}{l^n}=b(l^n)^{s-1}\right),
\end{equation}
 где $M=\sum_{i=1}^{m}\sum_{j=1}^{n-1}C_n^jl^{n-j}x_i^{n-j}t_{i}^j$.
\end{lem}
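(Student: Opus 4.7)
The plan is to reduce both sides of~(\ref{eq-2-1}) to the same counting problem by a standard division-with-remainder substitution. Every non-negative integer $x_i$ has a unique representation $x_i = l y_i + t_i$ with integer $y_i \geq 0$ and $t_i \in \{0,1,\ldots,l-1\}$, so the map sending $(x_1,\ldots,x_m)$ to the pair $\bigl((t_1,\ldots,t_m),(y_1,\ldots,y_m)\bigr)$ is a bijection. I would partition the solution set of $\sum_{i=1}^m x_i^n = b(l^n)^s$ according to the residue tuple $(t_1,\ldots,t_m)$.

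Next, I would apply the binomial theorem termwise,
\[
(l y_i + t_i)^n = l^n y_i^n + \sum_{j=1}^{n-1} C_n^j\, l^{n-j} y_i^{n-j} t_i^j + t_i^n,
\]
and sum over $i$ to obtain $\sum_{i=1}^m x_i^n = l^n \sum_{i=1}^m y_i^n + M + \sum_{i=1}^m t_i^n$, where $M$ is precisely the quantity defined in the statement, with the new quotient variable $y_i$ playing the role of the dummy $x_i$ appearing on the right-hand side. Substituting this identity into $\sum x_i^n = b(l^n)^s$ and dividing by $l^n$ reproduces verbatim the transformed equation on the right-hand side of~(\ref{eq-2-1}).

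Summing the number of $(y_1,\ldots,y_m)$-solutions of the transformed equation over all $l^m$ residue tuples $(t_1,\ldots,t_m) \in \{0,\ldots,l-1\}^m$ then gives~(\ref{eq-2-1}). The only point that requires a brief comment is that for fixed $(t_1,\ldots,t_m)$ the fraction $(M + \sum_i t_i^n)/l^n$ need not be an integer for arbitrary $(y_i)$; however, whenever it fails to be an integer, the left-hand side of the transformed equation is non-integer and cannot equal $b(l^n)^{s-1}$, so such tuples contribute zero to the count and the identity stands as written. I do not expect a genuine obstacle here: the lemma is essentially a bookkeeping identity that splits a counting problem by joint residue class modulo $l$, and the binomial expansion carries out all the algebraic work.
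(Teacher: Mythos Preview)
Your proposal is correct and follows essentially the same approach as the paper: both arguments substitute $x_i = l y_i + t_i$ (the paper reuses the symbol $x_i$ for the quotient), expand via the binomial theorem, and sum over residue tuples $(t_1,\ldots,t_m)\in\{0,\ldots,l-1\}^m$. The only cosmetic difference is that the paper phrases the partitioning step through the Kronecker delta identity $\sum_x \delta(f(x);d)=\sum_{t=0}^{l-1}\sum_x \delta(f(lx+t);d)$, whereas you describe the same bijection directly.
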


\begin{proof}
    Очевидно, что для любой функции $f(x_1,x_2,\ldots ,x_m)$ и $d\in \mathbb{R}$ имеем:
    \begin{equation}\label{eq-2-2}
P\left(f(x_1,x_2,\ldots ,x_m)=d\right)=\sum_{x_1}\sum_{x_2}\ldots \sum_{x_m}\delta(f(x_1,x_2,\ldots ,x_m);d),
    \end{equation}

    \noindent где $\delta(x;y)$ -- функция Кронекера, а под символом $\sum_{x}$  подразумевается суммирование по всем целым неотрицательным $x$.

    Т. к. для любой функции $f(x)$, любого $d\in \mathbb{R}$ и любого натурального $l\geq 2$  имеем
    \[\sum_{x}\delta(f(x);d)=\sum_{t=o}^{l-1}\sum_{x}\delta(f(lx+t);d),\]
    то из формулы (\ref{eq-2-2}) получим, используя формулу бинома Ньютона, что 
    \vskip+3mm
 \[P\left(\sum_{i=1}^{m}x_i^{n}=b(l^{n})^{s}\right)=\sum_{t_1=0}^{l-1}\sum_{t_2=0}^{l-1}\ldots \sum_{t_m=0}^{l-1}P\left(\sum_{i=1}^{m}(lx_i+t_i)^n=b(l^n)^{s}\right)= \]
 \[=\sum_{t_1=0}^{l-1}\sum_{t_2=0}^{l-1}\ldots \sum_{t_m=0}^{l-1}P\left(\sum_{i=1}^{m}l^{n}x_i^{n}+\sum_{i=1}^{m}\sum_{j=1}^{n-1}C_{n}^{j}l^{n-j}x_i^{n-j}t_i^{j}+\sum_{i=1}^{m}t_i^{n}=b(l^{n})^{s}\right),\]

откуда следует формула (\ref{eq-2-1}). 
\end{proof}

Очевидно, что в правой части формулы (\ref{eq-2-1}) ненулевыми могут быть лишь те слагаемые, где $l^n \mid (M+\sum_{i=1}^{m}t_i^{n})$.
\vskip+2mm
Если $(x_1,x_2,\ldots ,x_m)$ является решением уравнения \[\sum_{i=1}^{m}x_i^n=b,\] то $(cx_1,cx_2,\ldots ,cx_m)$, где $c\in \mathbb{N}$, является решением уравнения \[\sum_{i=1}^{m}x_i^n=bc^n.\]
Поэтому в общем случае, при любом целом неотрицательном $b$ и $c\in \mathbb{N}$, имеем 
\[P\left(\sum_{i=1}^mx_i^n=b\right)\leq P\left(\sum_{i=1}^mx_i^n=bc^n\right).\]

Приведем достаточные условия, при которых 
\[P\left(\sum_{i=1}^mx_i^n=bc^n\right)=P\left(\sum_{i=1}^mx_i^n=b\right)\] при любом целом неотрицательном $b$.

Из формулы (\ref{eq-2-1}) следует
\begin{prop}\label{prop-1}
Пусть $l>1$ и $x^n\equiv 1(mod \;l)$ при любом целом неотрицательном $x$, не делящемся на $l$. Тогда, если $m<l$, то при любом целом неотрицательном $b$ и натуральном $s$ имеем:
\[P\left(\sum_{i=1}^{m}x_i^n=b(l^s)^n\right)=P\left(\sum_{i=1}^mx_i^n=b\right).\]
\end{prop}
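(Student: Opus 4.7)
The plan is to apply Lemma~\ref{lem-2-1} and induct on $s\ge 1$. The lemma rewrites $P\left(\sum_{i=1}^{m}x_i^n=b(l^n)^s\right)$ as a sum over $(t_1,\dots,t_m)\in\{0,1,\dots,l-1\}^m$ of terms of the form $P\left(\sum_{i=1}^{m}x_i^n+\frac{M+\sum_{i=1}^{m}t_i^n}{l^n}=b(l^n)^{s-1}\right)$. By the remark following the lemma, such a term can be nonzero only when $l^n\mid M+\sum_{i=1}^{m}t_i^n$, and in particular only when $l\mid M+\sum_{i=1}^{m}t_i^n$. My goal is to show that, under the hypotheses of the proposition, the only surviving tuple is $t_1=\cdots=t_m=0$; at that tuple $M=0$ and $\sum_{i=1}^{m}t_i^n=0$, so the summand collapses to $P\left(\sum_{i=1}^{m}x_i^n=b(l^n)^{s-1}\right)$.

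The key observation is that every summand in $M=\sum_{i=1}^{m}\sum_{j=1}^{n-1}C_n^j\,l^{n-j}x_i^{n-j}t_i^j$ contains the factor $l^{n-j}$ with $1\le j\le n-1$, hence $l\mid M$ identically in the $x_i$. Consequently the mod-$l$ obstruction depends only on $(t_1,\dots,t_m)$ and reduces to $l\mid \sum_{i=1}^{m}t_i^n$. Let $S=\{i:t_i\ne 0\}$. For $i\in S$ we have $0<t_i<l$, so $l\nmid t_i$, and the hypothesis $x^n\equiv 1\pmod l$ for $l\nmid x$ yields $t_i^n\equiv 1\pmod l$. Hence $\sum_{i=1}^{m}t_i^n\equiv |S|\pmod l$, and since $0\le |S|\le m<l$ the congruence $l\mid |S|$ forces $|S|=0$.

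Thus Lemma~\ref{lem-2-1} collapses to the one-step recursion $P\left(\sum_{i=1}^{m}x_i^n=b(l^n)^s\right)=P\left(\sum_{i=1}^{m}x_i^n=b(l^n)^{s-1}\right)$, and induction on $s$ yields the equality with $P\left(\sum_{i=1}^{m}x_i^n=b\right)$. The main subtlety is in the previous paragraph: one must observe that $l\mid M$ holds identically in the $x_i$, so that the obstruction becomes a condition purely on $(t_1,\dots,t_m)$; after this reduction the strict inequality $m<l$ does the combinatorial work, and the rest is routine bookkeeping.
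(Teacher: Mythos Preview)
Your proof is correct and follows essentially the same route as the paper: use Lemma~\ref{lem-2-1}, observe that $l\mid M$ identically so the divisibility obstruction reduces to $l\mid\sum t_i^n$, use the hypothesis to get $\sum t_i^n\equiv |S|\pmod l$ with $|S|\le m<l$, conclude that only the tuple $t_1=\cdots=t_m=0$ survives, and then iterate. The paper's argument is phrased slightly differently (it speaks of ``some $t_i$ not divisible by $l$'' rather than introducing the set $S$), but the content is identical.
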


\begin{proof}
  Имеем $l\mid t_i^n$ при $l\mid t_i$, и, согласно условию теоремы, $t_i^n\equiv 1 (mod \; l)$  при $l\nmid t_i$. Поэтому, если среди чисел $t_1,t_2,\ldots ,t_m$ есть число, не делящееся на $l$, то $\sum_{i=1}^{n} t_i^n\equiv d \; (mod \; l)$, где $d\in \lbrace 1,2,\ldots ,m\rbrace$. Т. к. $m<l$, то в этом случае получим
  \begin{equation}\label{eq-2-3}
l\nmid \sum_{i=1}^{n} t_i^n.
  \end{equation}
Т. к. $M=\sum_{i=1}^{n} \sum_{j=1}^{n-1}C_n^j l^{n-j}x_i^{n-j}t_i^{j} $ делится на $l$, то из (\ref{eq-2-3}) следует, что \[\frac{M+\sum_{i=1}^{n}t_i^n}{l^n} \not \in \mathbb{N}.\]

Таким образом, если среди $t_1, t_2,..., t_m$ есть число, не делящееся на $l$, то 
$$P(\sum_{i=1}^{m}x_i^n+\frac{M+\sum_{i=1}^{n}t_i^n}{l^n}=b(l^n)^{s-1})=0.$$
Из формулы (\ref{eq-2-1}) следует, что $t_i$ делится на $l$ тогда и только тогда, когда $t_i=0$, т.к.  $0\leq t_i<l$. Поэтому в правой части формулы (\ref{eq-2-1}) отличным от нуля может быть только то слагаемое, которое соответствует нулевым значениям $t_i$ $(i=1,2,...,m)$. Но если $t_i=0$ при $i=1,2,...,m$, то $\sum_{i=1}^{m}t_i^{n}=0$, $M=0$ и $\frac{M+\sum_{i=1}^{m}t_i^n}{l^n}=0$

Таким образом, из формулы (\ref{eq-2-1}) получим, что \[P_{m}^{n}(b(l^n)^s)=P_{m}^{n}(b(l^n)^{s-1})\] при любом целом неотрицательном $b$ и $s\in \mathbb{N}$. Если $s-1\in \mathbb{N}$, то аналогично получим \[P_{m}^{n}(b(l^n)^{s-1})=P_{m}^{n}(b(l^n)^{s-2}),\] 
\noindent и т. д. продолжая, получим
\[P_{m}^{n}(b(l^n)^s)=P_{m}^{n}(b).\]
\end{proof}

Из Предложения \ref{prop-1} и малой теоремы Ферма следует, что если $p$ простое число, $p\geq 3$ $s, n\in \mathbb{N}$ и $m<p$, то
\begin{equation}\label{eq-2-4}
P\left(\sum_{i=1}^{m} x_{i}^{(p-1)n}=b(p^s)^{(p-1)n}\right)=P\left(\sum_{i=1}^{m} x_{i}^{(p-1)n}=b\right).
\end{equation}
\noindent при любом целом неотрицательном $b$.

\begin{ex}\label{ex-1}
При любом целом неотрицательном $b$ и $n, s\in \mathbb{N}$ имеем 
\begin{equation}\label{eq-2-5}
    P\left(x_1^{2n}+x_2^{2n}=b(3^s)^{2n}\right)=P\left(x_1^{2n}+x_2^{2n}=b\right);
\end{equation}

\begin{equation}\label{eq-2-6}
    P\left(\sum_{i=1}^{m}x_i^{4n}=b(5^s)^{4n}\right)=P\left(\sum_{i=1}^{m}x_i^{n}=b\right)
\end{equation}

\noindent при $m\leq 4$.
\end{ex}
\vskip+2mm
\begin{lem}\label{lem-2-2}
Если  $n!=p_1^{k_1}p_2^{k_2}\ldots p_l^{k_l}$ -- каноническое разложение числа  $n!$ ( $p_i$-- простые числа, $k_i\in \mathbb{N}$), то $k_i<n$ $(i=1,2,\ldots ,l)$.
\end{lem}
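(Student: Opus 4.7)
My plan is to derive the bound from Legendre's formula for the $p$-adic valuation of $n!$. For each prime divisor $p_i$ of $n!$ one has
\[
k_i = \sum_{j=1}^{\infty}\left\lfloor\frac{n}{p_i^{j}}\right\rfloor,
\]
which I would justify by the standard counting argument: among $1,2,\ldots,n$ exactly $\lfloor n/p_i^{j}\rfloor$ are divisible by $p_i^{j}$, and summing over $j$ counts each factor of $p_i$ appearing in $n!$ with the correct multiplicity.

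Next I would dominate each floor by the fraction itself and sum the resulting geometric series:
\[
k_i \;\leq\; \sum_{j=1}^{\infty}\frac{n}{p_i^{j}} \;=\; \frac{n}{p_i-1}.
\]
For $p_i\geq 3$ this already yields $k_i\leq n/2<n$, so all odd primes are disposed of at once.

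The main obstacle is the prime $p_i=2$, where the geometric bound degenerates to $k_2\leq n$ without strict inequality. I would resolve this by observing that the sum $\sum_{j\geq 1}\lfloor n/2^{j}\rfloor$ is actually a finite sum (every term with $2^{j}>n$ vanishes), whereas the exact geometric sum $\sum_{j\geq 1}n/2^{j}=n$ needs infinitely many strictly positive terms; comparing term by term yields $k_2<n$. A cleaner uniform alternative, which I may prefer for brevity, is Legendre's closed form $k_i=(n-s_{p_i}(n))/(p_i-1)$, where $s_{p_i}(n)\geq 1$ is the base-$p_i$ digit sum of $n$ (the lemma is vacuous unless $n\geq 2$, so $s_{p_i}(n)\geq 1$ is automatic); this gives $k_i\leq (n-1)/(p_i-1)\leq n-1<n$ for every prime $p_i$ in one stroke.
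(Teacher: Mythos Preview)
Your approach is correct and essentially the same as the paper's: both invoke Legendre's formula and bound by the geometric series $\sum_{j\geq 1} n/p_i^{j}=n/(p_i-1)$. The paper is slightly more economical, writing directly $k_i<\sum_{j\geq 1} n/p_i^{j}\leq n$ for every prime $p_i$ (the strict inequality holds precisely by your finite-versus-infinite observation), so no separate treatment of $p_i=2$ and no digit-sum detour is needed.
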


\begin{proof}
Как известно \cite{B}, при $1\leq i \leq l$ имеем
\[k_i=\left[\frac{n}{p_i}\right]+\left[\frac{n}{p_i^{2}}\right]+\left[\frac{n}{p_i^{3}}\right]+\ldots +0.\] Поэтому 
\[k_i<\frac{n}{p_i}+\frac{n}{p_i^{2}}+\frac{n}{p_i^3}+\ldots =\frac{\frac{n}{p_i}}{1-\frac{1}{p_i}}\leq n.\]
\end{proof}

\begin{lem}\label{lem-2-3}
Если $p^k\mid n$ и $1\leq j\leq n-1$, где $p$ -- простое число, $k\in \mathbb{N}$, то $p^{k+1}\mid C_{n}^{j}p^{n-j}.$
\end{lem}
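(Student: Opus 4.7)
I would compute the $p$-adic valuation $v_p$ of $C_n^j\,p^{n-j}$ directly. Writing
\[
C_n^j\,p^{n-j} \;=\; \frac{n(n-1)(n-2)\cdots(j+1)}{(n-j)!}\,p^{n-j},
\]
I take $v_p$ of both sides to get
\[
v_p\bigl(C_n^j\,p^{n-j}\bigr) \;=\; v_p\bigl(n(n-1)\cdots(j+1)\bigr) \;+\; \bigl((n-j) - v_p((n-j)!)\bigr).
\]
The first summand is at least $k$, simply because $n$ itself appears as one of the factors in the numerator and $p^k\mid n$ by hypothesis. So the whole task reduces to showing that the second summand is at least~$1$.

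That last inequality is precisely what the just-proved Lemma~\ref{lem-2-2} gives, now applied to the number $n-j$ (which lies in $\{1,2,\ldots,n-1\}$ since $1\leq j\leq n-1$). The lemma yields $v_p((n-j)!)<n-j$ as soon as $(n-j)!>1$, i.e.\ whenever $n-j\geq 2$; and in the single remaining subcase $n-j=1$ one has $v_p(1!)=0<1$ trivially. Either way $v_p((n-j)!)\leq n-j-1$, hence $(n-j)-v_p((n-j)!)\geq 1$. Combining with the previous estimate yields $v_p(C_n^j\,p^{n-j})\geq k+1$, which is the asserted divisibility. I do not anticipate any real obstacle: the whole argument boils down to the elementary bound $v_p(m!)<m$ of Lemma~\ref{lem-2-2} together with the observation that $p^k\mid n$ forces $p^k$ to divide the product $n(n-1)\cdots(j+1)$. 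The one place where I would be careful is the boundary case $n-j=1$, which falls outside the literal range of Lemma~\ref{lem-2-2} (where $n!>1$ is implicit) but is trivially handled by hand.
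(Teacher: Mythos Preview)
Your argument is correct and follows essentially the same route as the paper's own proof: both write $C_n^j p^{n-j}=\dfrac{(j+1)\cdots n}{(n-j)!}\,p^{n-j}$, pull a factor $p^k$ out of the numerator via $p^k\mid n$, and then invoke Lemma~\ref{lem-2-2} to see that $p^{n-j}$ overcomes $(n-j)!$ by at least one power of $p$. The only cosmetic difference is that the paper phrases the second step as ``$\frac{p^{n-j}}{(n-j)!}=\frac{p^l}{z}$ with $l\in\mathbb{N}$, $p\nmid z$'' rather than in the language of $v_p$; your explicit handling of the boundary case $n-j=1$ is a small point the paper leaves implicit.
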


\begin{proof}
Т. к. $1\leq j\leq n-1$, то 
\begin{equation}\label{eq-2-7}
C_n^jp^{n-j}=\frac{(j+1)(j+2)\ldots n}{(n-j)!}p^{n-j},
\end{equation}
где $n-j\geq 1$.

Из Леммы \ref{lem-2-2} следует, что $\frac{p^{n-j}}{(n-j)!}=\frac{p^{l}}{z}$, где $l\in N$ $p\nmid z$. Т. к. $p^k\mid n$, то отсюда и из формулы (\ref{eq-2-7}) следует, что $C_n^jp^{n-j}=\frac{sp^{k+1}}{z},$ где $s\in N$, $p\nmid z$. Поэтому $p^{k+1}\mid C_n^{j}p^{n-j}$.
\end{proof}

Как известно \cite{HW}, если $p$ простое число, $k\in \mathbb{N}$, то
\begin{equation}\label{eq-2-8}
x^{\varphi(p^k)}\equiv 1\; (mod \; p^k) 
\end{equation}
при $p\nmid x$ и
\begin{equation}\label{eq-2-9}
x^{\varphi(p^k)}\equiv 0\; (mod \; p^k)
\end{equation}
при $p\mid x$.

Из формул (\ref{eq-2-8}), (\ref{eq-2-9}), Леммы \ref{lem-2-3} и формулы (\ref{eq-2-1}) (при $l=p$) следует

\begin{prop}\label{prop-2}
Если $\varphi(p^{k+1})\mid n$, где $p$ простое число, $k\in \mathbb{N}$, $m<p^{k+1}$, $s\in \mathbb{N}$, то
\[P\left(\sum_{i=1}^{m}x_i^{n}=b(p^s)^n\right)=P\left(\sum_{i=1}^{m}x_i^{n}=b\right).\]
\end{prop}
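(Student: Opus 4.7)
The plan is to mimic the proof of Proposition~\ref{prop-1}, applying Lemma~\ref{lem-2-1} with $l=p$ (noting $(p^{s})^{n}=(p^{n})^{s}$) to rewrite $P\bigl(\sum_{i=1}^{m}x_i^n=b(p^n)^s\bigr)$ as a sum indexed by tuples $(t_1,\ldots,t_m)\in\{0,1,\ldots,p-1\}^m$ of probabilities $P\bigl(\sum x_i^n+\frac{M+\sum t_i^n}{p^n}=b(p^n)^{s-1}\bigr)$. If I can show that the only tuple producing a nonzero summand is $t_1=\cdots=t_m=0$, then (\ref{eq-2-1}) collapses to $P_m^n(b(p^n)^s)=P_m^n(b(p^n)^{s-1})$, and iterating $s$ times gives the claim.

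To show that every tuple with at least one nonzero $t_i$ contributes zero, it suffices to verify that $\frac{M+\sum_{i=1}^m t_i^n}{p^n}$ fails to be a non-negative integer for every choice of the $x_i$. I will establish this through the stronger non-divisibility $p^{k+1}\nmid M+\sum t_i^n$, which forces $p^n\nmid(\,\cdot\,)$ since $n\geq\varphi(p^{k+1})\geq k+1$. For the constant part $\sum t_i^n$: since $\varphi(p^{k+1})\mid n$, formula (\ref{eq-2-8}) yields $t_i^n\equiv 1\pmod{p^{k+1}}$ whenever $p\nmid t_i$ (equivalently $0<t_i<p$), while $t_i=0$ contributes $0$; letting $d$ count the nonzero $t_i$'s, one gets $\sum t_i^n\equiv d\pmod{p^{k+1}}$ with $1\leq d\leq m<p^{k+1}$, so $p^{k+1}\nmid\sum t_i^n$. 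For the term $M$: since $\varphi(p^{k+1})=p^k(p-1)$ divides $n$, we have $p^k\mid n$, and Lemma~\ref{lem-2-3} then gives $p^{k+1}\mid C_n^{j}p^{n-j}$ for every $1\leq j\leq n-1$, so $p^{k+1}$ divides $M$ term by term, independently of the $x_i$.

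With only the zero tuple surviving, $M=\sum t_i^n=0$ and the recursion collapses to $P_m^n(b(p^n)^s)=P_m^n(b(p^n)^{s-1})$; iterating exactly as at the end of the proof of Proposition~\ref{prop-1} yields $P_m^n(b(p^n)^s)=P_m^n(b)$. The main conceptual point is recognising that the hypothesis $m<p^{k+1}$ (rather than the cruder $m<p$ one would get by naively copying Proposition~\ref{prop-1}) is precisely what is needed to push the congruence analysis up from modulus $p$ to modulus $p^{k+1}$; the remaining verifications are routine once one packages together Lemma~\ref{lem-2-3} (to control $M$) and the congruences (\ref{eq-2-8})--(\ref{eq-2-9}) (to control $\sum t_i^n$), with the automatic lower bound $n\geq k+1$ coming from $\varphi(p^{k+1})\mid n$.
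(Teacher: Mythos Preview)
Your proposal is correct and follows essentially the same route as the paper's own proof: apply Lemma~\ref{lem-2-1} with $l=p$, use Lemma~\ref{lem-2-3} together with $p^{k}\mid n$ to get $p^{k+1}\mid M$, use the congruences (\ref{eq-2-8})--(\ref{eq-2-9}) together with $m<p^{k+1}$ to get $p^{k+1}\nmid\sum_i t_i^{n}$ whenever some $t_i\neq 0$, pass from $p^{k+1}\nmid(\,\cdot\,)$ to $p^{n}\nmid(\,\cdot\,)$ via $n\ge\varphi(p^{k+1})\ge k+1$, and then iterate the resulting recursion $P_m^n(b(p^n)^s)=P_m^n(b(p^n)^{s-1})$ down to $s=0$. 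The paper also remarks that $\varphi(p^{k+1})\ge k+1$ is easily seen by induction on $k$; otherwise the arguments are identical.
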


\begin{proof}
Т. к. $\varphi(p^{k+1})\mid n$, то из формул (\ref{eq-2-8}) и (\ref{eq-2-9}) следует, что
\[p^{k+1}\mid t_i^n\]
при $p\mid t_i$ и
\[t_i^n\equiv 1 \;  (mod \; p^{k+1})\] при  $p\nmid t_i$ ($i\in \lbrace 1,2,\ldots ,m\rbrace$).
Поэтому, если среди $t_1,t_2,\ldots ,t_m$ есть число, не делящееся на $p$, то 
\[\sum_{i=1}^{m}t_i^{n}\equiv r \; (mod\; p^{k+1}),\] 
где $r\in \lbrace 1,2,\ldots ,m\rbrace$.
Т. к. $m<p^{k+1}$, то отсюда следует, что
\begin{equation}\label{eq-2-10}
    p^{k+1}\nmid \sum_{i=1}^{m}t_i^{n}.
\end{equation}

Т. к. $\varphi(p^{k+1})\mid n$, то $p^k\mid n$. Поэтому из Леммы \ref{lem-2-3} следует, что число \[M=\sum_{i=1}^{m}\sum_{j=1}^{n-1}C_{n}^{j}p^{n-j}x^{n-j}t^j\]
делится на $p^{k+1}$. Поэтому из (\ref{eq-2-10}) следует, что 
\[p^{k+1}\nmid M+\sum_{i=1}^{m}t_i^n.\]
 Отсюда следует, что 
\[p^{n}\nmid M+\sum_{i=1}^{m}t_i^n,\]
т. к. $\varphi(p^{k+1})\mid n$ и $\varphi(p^{k+1})\geq k+1$ при $k\in N$ (что нетрудно доказать методом математической индукции по $k$). 

Таким образом, если среди $t_1,t_2,\ldots ,t_m$ есть число, не делящееся на $p$, то
\[P\left(\sum_{i=1}^{m}x_i^n+\frac{M+\sum_{i=1}^{m}t_i^n}{p^n}=b(p^n)^{s-t}\right)=0.\]

Из формулы (\ref{eq-2-1}) при $l=p$ следует, что $p\mid t_i$ тогда и только тогда, когда $t_i=0$, т.к. $0\leq t_i<p$. Поэтому в правой части равенства (\ref{eq-2-1}) отличным от нуля может быть только то слагаемое, которое соответствует нулевым значениям $t_i$ ($i=1,2,\ldots ,m)$. Но если $t_1=t_2=\ldots =t_m=0$, то  $\sum_{i=1}^{m}t_i^n=0$, $M=0$ и 
\[\frac{M+\sum_{i=1}^{m}t_i^n}{p^n}=0.\]

Таким образом, из формулы (\ref{eq-2-1}) получим, что \[P_{m}^n(b(p^n)^{s})=P_m^{n}(b(p^{n})^{s-1}).\] Если $s-1\in \mathbb{N}$, то аналогично получим, что \[P_{m}^n(b(p^n)^{s-1})=P_m^{n}(b(p^{n})^{s-2})\] и т. д. продолжая, получим  \[P_{m}^n(b(p^n)^{s})=P_m^{n}(b)\] при любом целом неотрицательном $b$ и $s\in \mathbb{N}$.
\end{proof}

Из Предложения \ref{prop-2} и формулы (\ref{eq-2-4}) следует
\begin{thm}\label{thm-1}
При любом целом неотрицательном $b$ имеем:
\begin{equation}\label{eq-2-11}
P\left(\sum_{i=1}^m x_i^{\varphi(p^k)l}=b(p^s)^{\varphi(p^k)l}\right)=P\left(\sum_{i=1}^m x_i^{\varphi(p^k)l}=b\right),
\end{equation}
\noindent если $m\leq p^{k}-1$, где $p$ -- простое число, $k\in \mathbb{N}$, $p^{k}\geq 3,l,s\in \mathbb{N}$.
\end{thm}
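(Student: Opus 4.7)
The plan is to reduce Theorem~\ref{thm-1} to the two previously established results, Proposition~\ref{prop-2} and formula~(\ref{eq-2-4}), by splitting on whether $k=1$ or $k\geq 2$. This dichotomy is forced by the fact that Proposition~\ref{prop-2} requires its parameter (which I shall call $K$ to avoid clashing with the $k$ of Theorem~\ref{thm-1}) to lie in $\mathbb{N}$ and to satisfy $\varphi(p^{K+1})\mid n$; to match the exponent $\varphi(p^k)l$ of the theorem I would take $K=k-1$, which is admissible only when $k\geq 2$.

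First I would dispose of the case $k=1$. Under the hypotheses the side condition $p^k\geq 3$ forces $p\geq 3$, and $m\leq p^k-1$ becomes $m<p$. Since $\varphi(p)=p-1$, the desired identity for $k=1$ is literally formula~(\ref{eq-2-4}) with its free parameter specialized to $l$, so nothing is to be proved here.

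For $k\geq 2$ I would invoke Proposition~\ref{prop-2} with its $k$ taken to be our $k-1\in\mathbb{N}$ and its $n$ taken to be our $\varphi(p^k)l$. The required divisibility $\varphi(p^{(k-1)+1})=\varphi(p^k)$ dividing $\varphi(p^k)l$ is immediate, the inequality $m<p^{(k-1)+1}=p^k$ is exactly the hypothesis $m\leq p^k-1$, and $p^k\geq 3$ holds automatically since $k\geq 2$. The conclusion of Proposition~\ref{prop-2} then delivers precisely equation~(\ref{eq-2-11}).

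I do not anticipate any genuine obstacle; the theorem is a bookkeeping assembly of the two earlier results. The only point deserving a moment of care is the boundary case $k=1$, which Proposition~\ref{prop-2} formally misses because one would need $K=0\notin\mathbb{N}$, and which is covered exactly by the Fermat-little-theorem corollary~(\ref{eq-2-4}); the side condition $p^k\geq 3$ is what rules out the remaining problematic configuration $p=2$, $k=1$, which would otherwise fall outside the scope of both available tools.
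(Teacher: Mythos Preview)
Your proposal is correct and takes essentially the same approach as the paper, which simply states that Theorem~\ref{thm-1} follows from Proposition~\ref{prop-2} and formula~(\ref{eq-2-4}) without spelling out the details. You have correctly supplied those details, including the necessary case split on $k=1$ versus $k\geq 2$ and the observation that the hypothesis $p^k\geq 3$ is precisely what excludes the otherwise uncovered case $p=2$, $k=1$.
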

\vskip+3mm

Как уже было сказано выше, через  $d_{p_1,p_2,\ldots ,p_l}$, где  $p_1,p_2,\ldots ,p_l$ простые числа, будем обозначать любое натуральное число, каноническое разложение которого состоит из степеней простых чисел $p_1,p_2,\ldots ,p_l$, т.е. \[d_{p_1,p_2,\ldots ,p_l}=p_1^{n_1}p_2^{n_2}\ldots p_l^{n_l},\]
\noindent где $n_1,n_2,\ldots ,n_l$ -- любые натуральные числа.

\begin{lem}\label{lem-2-4}
Если $P_m^{n}(b(d_{p_{i}})^n)=P_m^{n}(b)$ при $m\leq m_i$ $(i=1,2,\ldots ,l)$, то \[P_m^{n}(b(d_{p_1p_2\ldots p_l})^{n})=P_m^{n}(b)\]
при $m\leq min(m_1,m_2,\ldots, m_l)$.
\end{lem}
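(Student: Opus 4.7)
The plan is a straightforward induction on the number of primes $l$. The base case $l=1$ is precisely the hypothesis (for $i=1$). For the inductive step, I would peel off one prime --- say $p_l$ --- and reduce to the case of the remaining $l-1$ primes by applying the single-prime hypothesis with a cleverly chosen right-hand side.

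Concretely, writing $d_{p_1 p_2 \ldots p_l} = p_l^{n_l}\, d_{p_1 p_2 \ldots p_{l-1}}$ with $d_{p_1 p_2 \ldots p_{l-1}} = p_1^{n_1}\cdots p_{l-1}^{n_{l-1}}$, one has
\[
(d_{p_1 p_2 \ldots p_l})^n \;=\; (p_l^{n_l})^n\,(d_{p_1 p_2 \ldots p_{l-1}})^n.
\]
Set $b' = b\,(d_{p_1 p_2 \ldots p_{l-1}})^n$, which is again a non-negative integer. Since $p_l^{n_l}$ is a natural number whose canonical decomposition is a power of $p_l$ alone, it qualifies as a $d_{p_l}$ in the sense defined just before the lemma. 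The single-prime hypothesis for $i=l$ (valid because $m\le m_l$) applied to this $b'$ yields
\[
P_m^{n}\bigl(b\,(d_{p_1 \ldots p_l})^n\bigr) \;=\; P_m^{n}\bigl(b'\,(p_l^{n_l})^n\bigr) \;=\; P_m^{n}(b') \;=\; P_m^{n}\bigl(b\,(d_{p_1 \ldots p_{l-1}})^n\bigr).
\]
The inductive hypothesis on the $l-1$ primes $p_1,\ldots,p_{l-1}$ now applies, since $m\le\min(m_1,\ldots,m_{l-1})$, and gives $P_m^{n}(b\,(d_{p_1 \ldots p_{l-1}})^n)=P_m^{n}(b)$, completing the step.

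There is essentially no obstacle: the whole argument is a telescoping, relying only on the fact that the single-prime hypothesis is stated uniformly for every non-negative right-hand side, which is exactly what lets us absorb the other prime powers into the auxiliary parameter $b'$. The only tiny verification is that $p_l^{n_l}$ really is an admissible $d_{p_l}$ for every natural $n_l$, which is immediate from the definition. Thus the lemma reduces neatly to repeated application of the single-prime results already established (Proposition~\ref{prop-2} and Theorem~\ref{thm-1}).
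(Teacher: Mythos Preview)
Your proof is correct and matches the paper's own argument essentially line for line: both peel off one prime power at a time, absorbing the remaining factors into the free parameter $b$, and iterate (the paper writes it out as a chain of equalities with ``и т.\,д.\ продолжая'' rather than a formal induction, but that is only a stylistic difference). Your closing remark that this ``reduces to Proposition~\ref{prop-2} and Theorem~\ref{thm-1}'' slightly overstates things---the lemma is purely conditional and does not invoke those results---but that does not affect the proof itself.
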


\begin{proof} Имеем
\[P_m^{n}(b(d_{p_1p_2\ldots p_l})^{n})=P_m^{n}(b(p_1^{n_1}p_2^{n_2}\ldots p_{l-1}^{n_{l-1}})^{n}(p_l^{n_l})^n)\] $(n_1,n_2,\ldots ,n_l\in \mathbb{N})$. Если $m\leq m_{l}$, то из условия теоремы следует, что
\[P_m^{n}(b(p_1^{n_1}p_2^{n_2}\ldots p_{l-1}^{n_{l-1}})^{n} (p_l^{n_l})^{n})=P_m^{n}(b(p_1^{n_1}p_2^{n_2}\ldots p_{l-2}^{n_{l-2}})^{n} (p_{l-1}^{n_{l-1}})^{n}).\]

\noindent Если $m\leq m_{l-1}$, то аналогично получаем, что 
\[P_m^{n}(b(p_1^{n_1}p_2^{n_2}\ldots p_{l-2}^{n_{l-2}})^{n} (p_{l-1}^{n_{l-1}})^{n})=P_m^{n}(b(p_1^{n_1}p_2^{n_2}\ldots p_{l-3}^{n_{l-3}})^{n} (p_{l-2}^{n_{l-2}})^{n}),\]
и т. д. продолжая, получим, что
\[P_m^{n}(b(d_{p_1p_2\ldots p_l})^{n})=P_m^{n}(b)\]
\noindent при $m\leq \min(m_1,m_2,\ldots ,m_l).$
 \end{proof}

 \begin{defn}\label{defn-1}
Пусть $n$ -- четное натуральное число. Простое число $p$ будем называть $\varphi$-делителем числа $n$, если существует натуральное число $k$, при котором удовлетворяется условие $\varphi(p^{k})\mid n$ ($p^k\geq 3)$. Наибольшее из возможных значений $k$, удовлетворяющих этому условию, будем называть степенью $\varphi$-делителя $p$ числа $n$.
\end{defn}

 Из Теоремы \ref{thm-1} и Леммы \ref{lem-2-4} следует
\begin{thm} \label{thm-2}
 При любом целом неотрицательном $b$ имеем 
 \begin{equation}\label{eq-2-12}
 P\left(\sum_{k=1}^m x_i^{n}=b(d_{p_1p_2\ldots p_l})^{n}\right)=P\left(\sum_{k=1}^m x_i^{n}=b\right),
 \end{equation}
 если $p_i$ является $\varphi$-делителем $k_i$-той степени числа $n$ $(i=1,2,\ldots l)$ и $m\leq \min(p_1^{k_1}-1, p_2^{k_2}-1,\ldots , p_l^{k_l}-1$). В частности,
\begin{equation}\label{eq-2-13}
P\left(x_1^{n}+x_2^{n}=b(d_{p_1p_2\ldots p_l})^{n}\right)=P\left(x_1^{n}+x_2^{n}=b\right),
\end{equation}
\noindent если $p_i$ $(i=1,2,\ldots ,l)$ являются $\varphi$-делителями любой степени числа $n$. 
\end{thm}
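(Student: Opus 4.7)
The plan is to deduce Theorem \ref{thm-2} directly from Theorem \ref{thm-1} and Lemma \ref{lem-2-4}, with essentially no new computation. First I would unpack the $\varphi$-divisor hypothesis: saying that $p_i$ is a $\varphi$-divisor of degree $k_i$ of $n$ means $\varphi(p_i^{k_i})\mid n$ and $p_i^{k_i}\geq 3$. Writing $n=\varphi(p_i^{k_i})\cdot l_i$ with $l_i\in\mathbb{N}$, Theorem \ref{thm-1} applied to the prime $p_i$ with exponent $k_i$ (taking $l=l_i$ and $s$ arbitrary) yields
\[
P_m^n\bigl(b(p_i^{s})^{n}\bigr)=P_m^n(b)
\]
for every non-negative integer $b$, every $s\in\mathbb{N}$, and every $m\leq p_i^{k_i}-1$.

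Since $d_{p_i}$ by definition ranges over all natural numbers of the form $p_i^{s}$ with $s\in\mathbb{N}$, the equality above rewrites as $P_m^n(b(d_{p_i})^n)=P_m^n(b)$, valid for all non-negative $b$ whenever $m\leq p_i^{k_i}-1$. Next I would apply Lemma \ref{lem-2-4} with $m_i:=p_i^{k_i}-1$, which collapses these single-prime equalities into
\[
P_m^n\bigl(b(d_{p_1p_2\ldots p_l})^{n}\bigr)=P_m^n(b),
\]
valid for $m\leq\min(p_1^{k_1}-1,\ldots,p_l^{k_l}-1)$. This is precisely (\ref{eq-2-12}). For the particular case (\ref{eq-2-13}) I would specialise to $m=2$: the inequality $2\leq p_i^{k_i}-1$ is equivalent to $p_i^{k_i}\geq 3$, which is already part of the very definition of a $\varphi$-divisor, so (\ref{eq-2-13}) drops out without any further restriction on the degrees $k_i$.

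No step in this deduction is genuinely difficult, because the technical content has already been absorbed into Proposition \ref{prop-2} (whence Theorem \ref{thm-1}) and the inductive combination argument of Lemma \ref{lem-2-4}. The only point meriting attention is the notational match between Theorem \ref{thm-1}, which writes the exponent explicitly as $\varphi(p^k)l$, and the $\varphi$-divisor language of Theorem \ref{thm-2}; but this match is handed to us by the very definition of $\varphi$-divisor, and the condition $p_i^{k_i}\geq 3$ transfers verbatim, so the whole argument reduces to a careful bookkeeping of the three hypotheses ($\varphi(p_i^{k_i})\mid n$, $p_i^{k_i}\geq 3$, and $m\leq p_i^{k_i}-1$) across the $l$ primes.
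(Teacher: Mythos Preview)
Your proposal is correct and follows exactly the route the paper takes: the paper simply states that Theorem~\ref{thm-2} follows from Theorem~\ref{thm-1} and Lemma~\ref{lem-2-4}, and your argument is precisely that deduction spelled out in detail. The special case~(\ref{eq-2-13}) via $m=2$ and $p_i^{k_i}\geq 3$ is also handled as intended.
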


\begin{ex}\label{ex-2}
Из Теорем \ref{thm-1} и \ref{thm-2} следует, что при любом целом неотрицательном $b$ и натуральных $m\geq 2, n,k,s$, имеем
\begin{equation}\label{eq-2-14}1.\; \; \; \; \;  \; P\left(\sum_{i=1}^m x_i^{2^kn}=b(2^s)^{2^kn}\right)=P\left(\sum_{i=1}^m x_i^{2^kn}=b\right)
\end{equation}
при $m\leq 2^{k+1}-1$ (т. к. $2^k=\varphi(2^{k+1})$).

В частности, 
\[P\left(\sum_{i=1}^m x_i^{2n}=b(2^s)^{2n}\right)=P\left(\sum_{i=1}^m x_i^{2n}=b\right)\]
при $m\leq 3$.
\begin{equation}\label{eq-2-15}
2. \; \; \; \; \;  \; \; P\left(\sum_{i=1}^m x_i^{2\cdot 3^kn}=b(3^s)^{2\cdot 3^kn}\right)=P\left(\sum_{i=1}^m x_i^{2\cdot 3^kn}=b\right);
\end{equation}
при $m\leq 3^{k+1}-1$ (т. к. $2\cdot 3^k=\varphi(3^{k+1})$);
\begin{equation}\label{eq-2-16}
3. \; \; \; \; \;  \; P\left(\sum_{i=1}^m x_i^{4\cdot 5^kn}=b(5^s)^{4\cdot 5^kn}\right)=P\left(\sum_{i=1}^m x_i^{4\cdot 5^kn}=b\right) 
\end{equation}
при $m\leq 5^{k+1}-1$ (т. к. $4\cdot 5^k=\varphi(5^{k+1})$).

\begin{equation}\label{eq-2-17}
4. \; \; \; \; \;  \; P\left(x_1^{2n}+x_2^{2n}=b(d_{2,3})^{2n}\right)=P\left(x_1^{2n}+x_2^{2n}=b\right)
\end{equation}
(т. к. $\varphi(2^2)\mid 2$, $\varphi(3)\mid 2$ и $\min(2^2-1,3-1)=2$).

\begin{equation}\label{eq-2-18}
5. \; \; \; \; \;  \; \;  P\left(\sum_{i=1}^m x_i^{4n}=b(d_{2,5})^{4n}\right)=P\left(\sum_{i=1}^m x_i^{4n}=b\right)
\end{equation}
при $m\leq 4$ (т. к. $\varphi(2^3)\mid 4$, $\varphi(5)\mid 4$ и $\min(2^3-1,5-1)=4$).

\begin{equation}\label{eq-2-19}
6. \; \; \; \; \;  \; \; \; \; \; \; \; P\left(x_1^{4n}+x_2^{4n}=b(d_{3,5})^{4n}\right)=P\left(x_1^{4n}+x_2^{4n}=b\right)
\end{equation}
(т.к. $\varphi(3)\mid 4$, $\varphi(5)\mid 4$ и $\min(3-1,5-1)=2)$.

\begin{equation}\label{eq-2-20}
7. \; \; \; \; \;  \; P\left(x_1^{4n}+x_2^{4n}=b(d_{2,3,5})^{4n}\right)=P\left(x_1^{4n}+x_2^{4n}=b\right)
\end{equation}
(т. к. $\varphi(2^3)\mid 4$, $\varphi(3)\mid 4$, $\varphi(5)\mid 4$ , $\min(2^3-1,3-1, 5-1)=2$).

\begin{equation}\label{eq-2-21}
8. \; \; \; \; \;  \;P\left(\sum_{i=1}^{m}x_i^{120n}=b(d_{2,3,5,7,11,13})^{120n}\right)=P\left(\sum_{i=1}^{m}x_i^{120n}=b\right)
\end{equation}
при $m\leq 6$ (т. к. $120$ делится на $\varphi(2^4)$, на $\varphi(3^2)$, на $\varphi(5^2)$, на $\varphi(7)$, на $\varphi(11)$, на $\varphi(13)$ и $\min(2^4-1,3^2-1,5^2-1,7-1,11-1,13-1)=6$).

\begin{equation}\label{eq-2-22}
9. \; \; \; \; \;  \;P\left(x_1^{10n}+x_2^{10n}=b(d_{2,3,11})^{10n}\right)=P\left(x_1^{10n}+x_2^{10n}=b\right)
\end{equation}
(т. к. $\varphi(2^2)\mid 10)$, $\varphi(3)\mid 10)$, $\varphi(11)\mid 10)$ и $\min(2^2-1,3-1,11-1)=2$).
\end{ex}

Приведенные выше теоремы можно применять для исследоваения некоторых диофантовых уравнений. Например, рассмотрим следующие уравнения:
\begin{equation}\label{eq-2-23}
1. \; x_1^6+ x_2^6 +x_3^6=233280.
\end{equation}
Т. к. $233280=5\cdot (2\cdot 3)^{6}$, $\varphi(2^2)\mid 6$, $\varphi(3^2)\mid 6$ и $3=\min(2^2-1,3^2-1)$, то из Теоремы \ref{thm-2} получим, что \[P\left(x_1^6+ x_2^6 +x_3^6=233280\right)=P\left(x_1^6+ x_2^6 +x_3^6=5\right).\]

Как нетрудно проверить, уравнение \[x_1^6+ x_2^6 +x_3^6=5\] не имеет решения. Поэтому уравнение (\ref{eq-2-23}) также не имеет решения.

\begin{equation}\label{eq-2-24}
2. \; x_1^4+ x_2^4 +\ldots +x_7^4=73728.
\end{equation}

Т. к. $73728=18\cdot (2^3)^4$ и $\varphi(2^3)\mid 4$, $7=2^3-1$, то из формулы (\ref{eq-2-14}) следует, что \[P\left(x_1^4+ x_2^4 +\ldots +x_7^4=73728\right)=P\left(x_1^4+ x_2^4 +\ldots +x_7^4=18\right).\] 

Из равенства \[x_1^4+ x_2^4 +\ldots +x_7^4=18\] следует, что $x_i\leq 2$ $(i=1,2,\ldots ,7)$. Поэтому, как нетрудно проверить, все решения последнего уравнения получаются всевозможными перестановками компонентов решения $(2,1,1,0,0,0,0)$. Поэтому все решения уравнения (\ref{eq-2-24}) получаются всевозможными перестановками компонентов в $(16,8,8,0,0,0,0)$. Из известной формулы \cite{V} для перестановок с повторениями следует, что уравнение (\ref{eq-2-24}) имеет $\frac{7!}{1!2!4!}=105$ решений. 
\vskip+2mm
Из Теоремы \ref{thm-1} также следует известный факт \cite{HW}, что числа вида $4^s(8l+7)$, где $s$ и $l$ целые неотрицательные числа, не представляются в виде $x_1^2+x_2^2+x_3^2$, где  $x_1$, $x_2$, $x_3$ целые числа. Действительно, из формулы (\ref{eq-2-14}) следует:
\begin{equation}\label{eq-2-25}
P\left(x_1^2+x_2^2+x_3^2=b(4^s)\right)=P\left(x_1^2+x_2^2+x_3^2=b\right)
\end{equation}
при любых целых неотрицатеьных $b$ и $s$. 

Принимая во внимание, что $x^2=0, 1$   или  $4\; (mod \; 8)$ при любом целом $x$, нетрудно проверить, что  $x_1^2+x_2^2+x_3^2$ несравнимо с $7$  по модулю $8$, если  $x_1$, $x_2$, $x_3$ целые числа. Поэтому из формулы (\ref{eq-2-25}) следует, что уравнение \[x_1^2+x_2^2+x_3^2=4^s(8l+7),\]
\noindent где $s$ и $l$ целые неотрицательные числа, неразрешимо.

\section{Признаки неразрешимости некоторых  диофантовых уравнений $\sum_{i=1}^{m}x_i^n=b$}

Рассмотрим диофантовое уравнение \[\sum_{i=1}^{m} x_i^n=b,\] где $n, m\geq 2$, $b, x_1, x_2, \ldots , x_m$ целые неотрицательные числа.

Очевидно, что при $0\leq b\leq m$ это уравнение разрешимо (все $m$- элементные перестановки с повторениями $(1,1,\ldots ,1,0,\ldots ,0)$, где число $1$ повторяется $b$ раз, являются его решениями). Рассмотрим случаи, когда $b>m$. 

\begin{thm}\label{thm-3}
Пусть $l\in \mathbb{N}$, $n\geq 2$. Если  $m<(1+\frac{1}{l})^n-1$ и $b\in \left[ml^n+1; (l+1)^n-1\right]$, (в частности, если $m<2^n-1$ и $b\in \left[m+1; 2^n-1\right]$), то уравнение
\begin{equation}\label{eq-3-1}
\sum_{i=1}^{m} x_i^n=b
\end{equation}
не имеет решения.
\end{thm}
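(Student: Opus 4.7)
The proof I would write is purely a size estimate and does not need any of the machinery developed earlier in the paper. The plan rests on two simple bounds applied in sequence.

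First, I would bound each component of any hypothetical solution. Assume $(x_1,\ldots,x_m)$ satisfies $\sum_{i=1}^{m}x_i^{n}=b$. Since all $x_i^n$ are nonnegative and $b\leq (l+1)^n-1<(l+1)^n$, for every $i$ we must have $x_i^n\leq b<(l+1)^n$, hence $x_i\leq l$. This is the only place where the upper endpoint of the interval is used.

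Second, I would use this componentwise bound to estimate the sum from above: $\sum_{i=1}^{m}x_i^{n}\leq m\cdot l^{n}$. But the left endpoint of the interval gives $b\geq ml^{n}+1$, so $\sum_{i=1}^{m}x_i^{n}=b>ml^{n}$, contradicting the previous inequality. Therefore no nonnegative integer solution exists, proving the theorem. The parenthetical special case is just $l=1$: the argument forces $x_i\in\{0,1\}$, so $\sum x_i^n\leq m<b$.

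The condition $m<(1+\tfrac{1}{l})^{n}-1$ plays no role in ruling out solutions; it only guarantees that the interval $[ml^{n}+1,\,(l+1)^{n}-1]$ is nonempty, since rearranging gives $(m+1)l^{n}<(l+1)^{n}$, i.e. $ml^{n}+1\leq (l+1)^{n}-l^{n}\leq (l+1)^{n}-1$. There is really no technical obstacle here — the only thing to watch is keeping the strict/non-strict inequalities aligned with the fact that $b$ lies strictly above $ml^{n}$ and strictly below $(l+1)^{n}$, which is exactly what cleanly produces the contradiction.
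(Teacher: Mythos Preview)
Your proposal is correct and follows essentially the same approach as the paper: both arguments rest on the dichotomy that either all $x_i\leq l$, giving $\sum x_i^n\leq ml^n$, or some $x_i\geq l+1$, giving $\sum x_i^n\geq (l+1)^n$, so no $b$ in the gap $[ml^n+1,(l+1)^n-1]$ can be attained. You phrase this as a direct contradiction while the paper describes the range of $\sum x_i^n$ explicitly, but the content is identical, including your remark that the condition on $m$ serves only to make the interval nonempty.
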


\begin{proof} Очевидно, что если $0\leq x_i\leq l$ при $i=1,2,\ldots ,m$, то \[\sum_{i=1}^{m} x_i^n\leq ml^n,\] а если среди чисел $x_1, x_2, \ldots , x_m$ есть число, большее чем $l$, то \[\sum_{i=1}^{m} x_i^n\geq (l+1)^n.\] Поэтому \[\sum_{i=1}^{m} x_i^n\in \left[0, ml^n\right] \cup \left[(l+1)^n, +\infty\right).\] Вследствие этого, если \[(l+1)^n-ml^n>1,\] т. е. если  \[m<(1+\frac{1}{l})^n-\frac{1}{l^n},\] и \[b\in \left[ml^n+1; (l+1)^n-1\right]\] при  каком-либо $l\in \mathbb{N}$ (в частности, при $l=1$), то \[\sum_{i=1}^{n} x_i^n\neq b.\] 
\end{proof}

Введем обозначение: через $r_{b;c}$, где $b$ целое неотрицательное число, $c\in \mathbb{N}$, будем обозначать остаток от деления $b$ на $c$. 

\begin{lem}\label{lem-3-1}
Уравнение $\sum_{i=1}^{m} x_i^n=b$, где $b\in \mathbb{N}$, не имеет решения, если существует  натуральное число $c$, при котором удовлетворяются следующие условия:

$x^n \equiv 0$
или  $1(mod \; c)$ при любом целом неотрицательном $x$; $m<c-1$ и $r_{b;c}>m$.
\end{lem}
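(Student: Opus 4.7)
The plan is to argue by contradiction, reducing the equation modulo $c$ and using the hypothesis that every $n$-th power is $\equiv 0$ or $1 \pmod{c}$ to pin down the possible values of $r_{b;c}$ to a small initial segment of $\{0,1,\ldots,c-1\}$.

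Concretely, I would assume that $(x_1,\ldots,x_m)$ is a non-negative integer solution of $\sum_{i=1}^{m} x_i^n = b$. By the first hypothesis, each summand satisfies $x_i^n \equiv \varepsilon_i \pmod{c}$ with $\varepsilon_i \in \{0,1\}$. Setting $k = \varepsilon_1 + \varepsilon_2 + \cdots + \varepsilon_m$, we get $\sum_{i=1}^{m} x_i^n \equiv k \pmod{c}$, where obviously $0 \le k \le m$. Since by assumption $m < c - 1$, we also have $k \le m < c - 1 < c$, so $k$ is already reduced modulo $c$, i.e., $r_{\sum x_i^n;\, c} = k$.

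Now I would use the equation itself: $r_{b;c} = r_{\sum x_i^n;\, c} = k \le m$. This contradicts the third hypothesis $r_{b;c} > m$, so no solution can exist. The condition $m < c - 1$ plays a double role here: it ensures both that the sum of indicators does not wrap around modulo $c$, and that the requirement $r_{b;c} > m$ is not vacuous (since a remainder modulo $c$ can be at most $c - 1$).

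There is no real obstacle beyond this short congruence argument; the lemma is essentially a packaging of the observation that if the image of the $n$-th power map modulo $c$ is $\{0,1\}$, then the image of $m$-fold sums of $n$-th powers modulo $c$ is contained in $\{0,1,\ldots,m\}$, so any $b$ whose residue exceeds $m$ cannot be represented. The only care needed is to verify that no reduction modulo $c$ is lost because of the size constraint $m < c - 1$, which is exactly what the hypotheses guarantee.
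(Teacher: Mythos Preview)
Your argument is correct and is essentially the same as the paper's: reduce the equation modulo $c$, observe that $\sum_{i=1}^m x_i^n \equiv l \pmod{c}$ with $0\le l\le m<c-1$, and conclude that $r_{b;c}\le m$, contradicting the hypothesis. The paper's proof is just a terser version of what you wrote.
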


\begin{proof}
Из условия леммы следует, что \[\sum_{i=1}^{m} x_i^n \equiv l \; (mod \; c),\] где $0\leq l\leq m<c-1$. Поэтому \[\sum_{i=1}^{m} x_i^n\neq b,\]
если  $r_{b;c}>m$.
\end{proof}

Из Леммы \ref{lem-3-1} и формул (\ref{eq-2-8}) и (\ref{eq-2-9}) имеем 
\begin{thm}\label{thm-4}
Уравнение  
\begin{equation}\label{eq-3-2}
\sum_{i=1}^{m} x_i^{\varphi(p^k)l}=b,
\end{equation}
где $p$ простое число, $k,l\in \mathbb{N}$ ($p^k\geq 3$), $2\leq m<p^k-1$ и $r_{b;p^k}>m$, не имеет решений. 
\end{thm}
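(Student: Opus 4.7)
The plan is to invoke Lemma \ref{lem-3-1} directly, with modulus $c = p^k$ and exponent $n = \varphi(p^k)\,l$. If the three hypotheses of that lemma can be checked in this setting, the conclusion of Theorem \ref{thm-4} is immediate. So the entire proof reduces to verifying those hypotheses.

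First I would handle the congruence condition ``$x^n \equiv 0$ or $1 \pmod c$''. For an arbitrary non-negative integer $x$, split into the two cases $p \mid x$ and $p \nmid x$. In the first case, formula (\ref{eq-2-9}) gives $x^{\varphi(p^k)} \equiv 0 \pmod{p^k}$; raising to the $l$-th power then yields $x^{\varphi(p^k)l} = \bigl(x^{\varphi(p^k)}\bigr)^l \equiv 0 \pmod{p^k}$. In the second case, formula (\ref{eq-2-8}) gives $x^{\varphi(p^k)} \equiv 1 \pmod{p^k}$, and the same $l$-th power argument yields $x^{\varphi(p^k)l} \equiv 1 \pmod{p^k}$. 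So every $x \geq 0$ satisfies $x^n \equiv 0$ or $1 \pmod{p^k}$, which is exactly the first hypothesis of Lemma \ref{lem-3-1}.

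The remaining two hypotheses of Lemma \ref{lem-3-1}, namely $m < c - 1$ and $r_{b;c} > m$, are the assumed inequalities $m < p^k - 1$ and $r_{b;p^k} > m$ of Theorem \ref{thm-4} read off verbatim with $c = p^k$. Applying Lemma \ref{lem-3-1} then closes the argument.

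The only observation that is not completely mechanical is that raising to the $l$-th power preserves both residues $0$ and $1$ modulo $p^k$, which lets us pass from Euler's theorem in the form (\ref{eq-2-8})--(\ref{eq-2-9}) to the analogous statement for the larger exponent $\varphi(p^k)\,l$. Since this observation is immediate, I do not anticipate a genuine obstacle; Theorem \ref{thm-4} is essentially a packaging of Lemma \ref{lem-3-1} with Euler's theorem applied to each summand.
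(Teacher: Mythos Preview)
Your proposal is correct and follows exactly the paper's approach: the paper simply states that Theorem~\ref{thm-4} follows from Lemma~\ref{lem-3-1} together with formulas (\ref{eq-2-8}) and (\ref{eq-2-9}), and you have spelled out precisely those verifications (including the trivial passage from exponent $\varphi(p^k)$ to $\varphi(p^k)l$).
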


Т. к. $2=\varphi(2^2)$, то из Теоремы \ref{thm-4} следует
\begin{thm}\label{thm-5}
Уравнение
\begin{equation}\label{eq-3-3}
x_1^{2n}+x_2^{2n}=b,
\end{equation}

где $b,n\in \mathbb{N}$, не имеет решения, если $b\equiv3$ $(mod \; 4)$.
\end{thm}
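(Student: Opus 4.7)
The plan is to derive Theorem \ref{thm-5} as a direct specialization of Theorem \ref{thm-4}, since the hypotheses of the former are precisely what one obtains by choosing $p=2$, $k=2$, $m=2$ in the latter. The only things to check are arithmetic identities and the translation of the congruence condition into the remainder condition $r_{b;p^k} > m$.

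First, I would compute $\varphi(p^k) = \varphi(2^2) = 2$, which is the key numerical coincidence. Setting $l = n$ in Theorem \ref{thm-4}, the exponent $\varphi(p^k)\,l$ becomes exactly $2n$, matching the left-hand side of (\ref{eq-3-3}). The side condition $p^k \geq 3$ holds since $2^2 = 4 \geq 3$.

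Next, I would verify the remaining hypotheses for $m = 2$. The condition $2 \leq m < p^k - 1$ reads $2 \leq 2 < 3$, which is satisfied. It remains to show that the hypothesis $b \equiv 3 \pmod{4}$ is equivalent to $r_{b;p^k} > m$, i.e., $r_{b;4} > 2$. Since $r_{b;4} \in \{0,1,2,3\}$, the inequality $r_{b;4} > 2$ is equivalent to $r_{b;4} = 3$, i.e., $b \equiv 3 \pmod{4}$. This is exactly the hypothesis of Theorem \ref{thm-5}.

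There is essentially no obstacle here: the content lies entirely in Theorem \ref{thm-4}, which already packages the Euler-function mod $p^k$ analysis. The only substantive step is recognizing that the minimal nontrivial case $p=2$, $k=2$ of Theorem \ref{thm-4} forces $m = 2$ (by the strict inequality $m < p^k - 1 = 3$) and converts the divisibility hypothesis into the familiar statement that a sum of two $2n$-th powers is never congruent to $3$ modulo $4$.
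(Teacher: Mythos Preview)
Your proposal is correct and follows exactly the paper's own argument: the paper simply notes that $2=\varphi(2^2)$ and invokes Theorem~\ref{thm-4}. Your write-up supplies the routine verifications ($p^k\geq 3$, $2\leq m<p^k-1$, and $r_{b;4}>2\Leftrightarrow b\equiv 3\pmod 4$) that the paper leaves implicit, but the approach is identical.
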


Отметим, что, как известно \cite{M}, \cite{F}, уравнение 
\begin{equation}\label{eq-3-4}
    x_1^2+x_2^2=b
\end{equation}
разрешимо тогда и только тогда, когда в каноническом разложении числа $b$  нет простого числа вида $4k+3$ (где $k$ целое неотрицательное число) в нечетной степени. Наше утверждение о неразрешимости уравнения (\ref{eq-3-4}) при $b\equiv 3$ $(mod\; 4)$ следует также из этого критерия разрешимости, т.к. если каноническое разложение числа $b$ не содержит простого числа вида $4k+3$
в нечетной степени, то тогда, как нетрудно проверить, $b\not \equiv3$ $(mod\; 4)$.

Как известно \cite{HW}, при $k\geq 2$ $(k\in \mathbb{N})$ и целом неотрицательном $x$, имеем
\[x^{(2^k)}\equiv 1 \; (mod \; 2^{k+2})\] при  $2\nmid x$ и
\[x^{(2^k)}\equiv 0\; (mod \; 2^{k+2})\]  при $2\mid x$.

Поэтому из Леммы \ref{lem-3-1} следует
\begin{thm}\label{thm-6}
Уравнение 
\[\sum_{i=1}^{m}x_i^{2^kl}=b,\]
где $k\geq 2$, $b,k,l\in \mathbb{N}$, не имеет решения, если $m<2^{k+2}-1$ и $r_{b;2^{k+2}}>m$.
\end{thm}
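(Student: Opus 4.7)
План доказательства состоит в прямом применении Леммы \ref{lem-3-1} с $c=2^{k+2}$ и $n=2^kl$. Для этого необходимо проверить выполнение всех трёх условий этой леммы. Условия $m<c-1=2^{k+2}-1$ и $r_{b;c}=r_{b;2^{k+2}}>m$ в точности совпадают с условиями, наложенными в формулировке теоремы, так что остаётся лишь убедиться, что при любом целом неотрицательном $x$ справедливо сравнение $x^{2^kl}\equiv 0$ или $1 \; (mod \; 2^{k+2})$.

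Для установления этого сравнения я бы воспользовался приведёнными непосредственно перед формулировкой теоремы соотношениями из \cite{HW}: при $k\geq 2$ имеем $x^{(2^k)}\equiv 1 \; (mod \; 2^{k+2})$, если $2\nmid x$, и $x^{(2^k)}\equiv 0 \; (mod \; 2^{k+2})$, если $2\mid x$. Возведя эти сравнения в $l$-ую степень (что законно, так как $l\in\mathbb{N}$), получим $x^{2^kl}=(x^{2^k})^l\equiv 1 \; (mod \; 2^{k+2})$ при $2\nmid x$ и $x^{2^kl}\equiv 0 \; (mod \; 2^{k+2})$ при $2\mid x$. Тем самым первое условие Леммы \ref{lem-3-1} также выполнено.

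После этого утверждение теоремы немедленно вытекает из Леммы \ref{lem-3-1}. Технически сложных мест в доказательстве не возникает — оно полностью сводится к подстановке $c=2^{k+2}$ в уже доказанный общий критерий (Лемма \ref{lem-3-1}) и к прямому замечанию, что цитируемые сравнения из \cite{HW} сохраняются при возведении в натуральную степень $l$. По сути, это прямой аналог пути, по которому из Леммы \ref{lem-3-1} и формул (\ref{eq-2-8}), (\ref{eq-2-9}) была получена Теорема \ref{thm-4}, с той лишь разницей, что здесь используется более сильное сравнение по модулю $2^{k+2}$ (а не $2^k$), доступное для степеней двойки при $k\geq 2$.
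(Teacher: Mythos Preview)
Предложенное доказательство верно и в точности повторяет подход статьи: там Теорема~\ref{thm-6} выведена из Леммы~\ref{lem-3-1} при $c=2^{k+2}$ с использованием приведённых непосредственно перед формулировкой теоремы сравнений из~\cite{HW}. Единственное отличие в том, что вы явно проговариваете возведение сравнений в степень~$l$, тогда как в статье этот шаг подразумевается.
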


\begin{lem}\label{lem-3-2}
Уравнение 
\[\sum_{i=1}^{m}x_i^{n}=b,\]
где $b\in \mathbb{N}$, не имеет решения, если
существует натуральное число $c$, при котором удовлетворяются следующие условия:
\vskip+2mm
$x^n\equiv 0,1$ или $-1 \; (mod \; c)$ при любом целом неотрицательном $x$; $m<\frac{c-1}{2}$ и $r_{b;c}\in \left[m+1; c-m-1\right]$.
\end{lem}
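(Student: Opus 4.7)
The plan is to follow the same skeleton as the proof of Lemma~\ref{lem-3-1}, adjusting only for the extra residue class $-1$. Under the hypothesis, each term $x_i^n$ is congruent mod $c$ to one of $-1$, $0$, or $1$. Summing $m$ such terms gives
\[
\sum_{i=1}^{m} x_i^n \equiv l \pmod{c}
\]
for some integer $l$ with $-m \le l \le m$.

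Next I would translate this into ordinary residues in $\{0,1,\ldots,c-1\}$. The integer $l$ above reduces mod $c$ to an element of
\[
\{0,1,\ldots,m\} \cup \{c-m,c-m+1,\ldots,c-1\}.
\]
The condition $m < \tfrac{c-1}{2}$ is exactly what guarantees $m+1 \le c-m-1$, so the two blocks are disjoint and the complementary block $[m+1,\,c-m-1]$ is nonempty. Hence any value of $\sum_{i=1}^m x_i^n$ reduced mod $c$ must lie in the set above and in particular cannot lie in $[m+1,\,c-m-1]$.

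Finally, the assumption $r_{b;c}\in [m+1,\,c-m-1]$ places the residue of $b$ precisely in this forbidden range, which yields $\sum_{i=1}^m x_i^n \not\equiv b \pmod{c}$ and therefore $\sum_{i=1}^m x_i^n \ne b$. There is no real obstacle here; the only point requiring care is the bookkeeping of signs and the verification that $m<\tfrac{c-1}{2}$ gives the disjointness of the two residue blocks, which is what makes the forbidden interval meaningful.
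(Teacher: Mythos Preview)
Your proposal is correct and follows essentially the same approach as the paper's proof: both observe that $\sum_{i=1}^m x_i^n \equiv l \pmod{c}$ with $l\in\{-m,\ldots,m\}$, translate this to the residue set $\{0,\ldots,m\}\cup\{c-m,\ldots,c-1\}$, and use $m<\tfrac{c-1}{2}$ together with $r_{b;c}\in[m+1,\,c-m-1]$ to conclude.
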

\begin{proof}
Из условия леммы следует, что \[\sum_{i=1}^{m}x_i^{n}\equiv l \;(mod \; c),\] где $l\in \lbrace-m,-m+1,-m+2,\ldots , -1,0,1,\ldots ,m\rbrace$. Поэтому, если \[\sum_{i=1}^{m}x_i^{n}=b,\] то $r_{b;c}\in \lbrace 0,1,2,\ldots ,m\rbrace\cup \lbrace c-m,c-m+1,\ldots ,c-1\rbrace$. Отсюда следует, что если $(c-m)-m>1$, т. е. если $m<\frac{c-1}{2}$, и $r_{b;c}\in \left[m+1;c-m-1\right]$, то \[\sum_{i=1}^{m}x_i^{n}\neq b.\]
\end{proof}

Как известно \cite{HW}, если  $p$-- простое число, $p\geq 3$, $k\in \mathbb{N}$, то $x^{\frac{\varphi(p^k)}{2}}\equiv 1$ или $-1\; (mod \; p^k)$ при $p\nmid x$ и $x^{\frac{\varphi(p^k)}{2}}\equiv 0\; (mod \; p^k)$ при $p\mid x$. Поэтому из Леммы \ref{lem-3-2} следует 
\begin{thm}\label{thm-7}
Уравнение 
\begin{equation}\label{eq-3-5}
\sum_{i=1}^{m}x_i^{\frac{\varphi(p^k)l}{2}}=b,
\end{equation}
где $p$ простое число, $p\geq 3$, $b, k\in \mathbb{N}$, $l$ -- нечетное число, не имеет решения, если $m<\frac{p^k-1}{2}$ и $r_{b;p^k}\in \left[m+1; p^k-m-1\right]$.
\end{thm}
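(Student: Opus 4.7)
The plan is to reduce Theorem 7 directly to Lemma 3.2 by taking $c=p^k$ and $n=\frac{\varphi(p^k)l}{2}$. Once the hypothesis ``$x^n\equiv 0, 1$ or $-1\pmod{c}$'' is verified, the conclusions $m<\frac{c-1}{2}$ and $r_{b;c}\in[m+1;c-m-1]$ are exactly what is assumed in the statement, so Lemma 3.2 immediately yields the nonsolvability.

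First I would handle the congruence condition. By the result from \cite{HW} quoted just before the theorem, for any non-negative integer $x$ one has $x^{\varphi(p^k)/2}\equiv \pm 1\pmod{p^k}$ when $p\nmid x$ and $x^{\varphi(p^k)/2}\equiv 0\pmod{p^k}$ when $p\mid x$. Raising to the $l$-th power gives
\[
x^{\varphi(p^k)l/2} = \bigl(x^{\varphi(p^k)/2}\bigr)^{l} \equiv (\pm 1)^{l}\ \text{or}\ 0 \pmod{p^k}.
\]
Here is where the oddness of $l$ is essential: since $l$ is odd, $(-1)^{l}=-1$, so the three residues $0,1,-1$ are genuinely the only ones that occur, and in particular we do not collapse to just $\{0,1\}$. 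Hence condition on $x^n\pmod c$ in Lemma 3.2 is satisfied with $c=p^k$ and $n=\varphi(p^k)l/2$.

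Second, the two remaining hypotheses of Lemma 3.2, namely $m<\tfrac{c-1}{2}=\tfrac{p^k-1}{2}$ and $r_{b;c}\in[m+1;c-m-1]=[m+1;p^k-m-1]$, are literally the assumptions of Theorem 7. Therefore Lemma 3.2 applies and gives that the equation $\sum_{i=1}^{m}x_i^{\varphi(p^k)l/2}=b$ has no non-negative integer solutions.

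There is essentially no obstacle: the only subtlety is remembering that the hypothesis ``$l$ odd'' is used precisely to preserve the value $-1$ under exponentiation, which is what distinguishes Theorem 7 (three possible residues, Lemma 3.2) from Theorem 4 (two possible residues, Lemma 3.1). Everything else is a direct substitution into the already-proved Lemma 3.2.
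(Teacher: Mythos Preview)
Your proof is correct and follows exactly the paper's approach: the paper derives Theorem~7 from Lemma~\ref{lem-3-2} by citing the fact from \cite{HW} that $x^{\varphi(p^k)/2}\equiv 0,\pm 1\pmod{p^k}$, and you spell out the same reduction with the additional (harmless) detail of raising to the odd power $l$. One minor remark: the oddness of $l$ is not strictly ``essential'' for the argument to go through---if $l$ were even the residues would collapse to $\{0,1\}$ and Lemma~\ref{lem-3-2} would still apply---but rather it delineates the case not already covered by the sharper Theorem~4.
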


\begin{ex} \; \;

1. Из Теоремы \ref{thm-6} следует, что уравнение \[\sum_{i=1}^{m}x_i^{12}=32015\] неразрешимо при  $m\leq 14$ (т.к. $2^2\mid 12$, $14<2^4-1$ и $r_{32015;2^4}=15>14$).

2. Из Теоремы \ref{thm-4} следует, что уравнение \[\sum_{i=1}^{m}x_i^6=7028\] неразрешимо при  $m\leq 7$ (т.к. $\varphi(3^2)\mid 6$, $7<3^2-1$ и $r_{7028;3^2}=8>7$).

3. Из Теоремы \ref{thm-7} следует, что уравнение \[\sum_{i=1}^{m}x_i^3=9005\] неразрешимо при  $m\leq 3$ (т.к. $3=\frac{\varphi(3^2)}{2}$, $3<\frac{9-1}{2}$ и $r_{9005;9}=5\in \left[4;5\right]$).

4. Из Теоремы \ref{thm-5} следует, что уравнение \[x_1^6+x_2^6=45399\] неразрешимо (т.к. $45399\equiv 3 $ $(mod \; 4)$).

5. Из Теоремы \ref{thm-3} следует, что уравнение \[\sum_{i=1}^{127}x_i^{15}=23607\] 
не имеет решения (т.к. $127<2^{15}-1$ и $23607\in \left[128; 2^{15}-1\right]$).

\end{ex}

\section{Признаки неразрешимости уравнения $\sum_{i=1}^{m}x_i^n=bc^n$}

Рассмотрим уравнение
\begin{equation}\label{eq-4-1}
   \sum_{i=1}^{m}x_i^n=bc^n, 
\end{equation}
где $b$ целое неотрицательное число, $n,m\geq 2$, $c\in \mathbb{N}$. Очевидно, что при $0\leq b\leq m$ это уравнение разрешимо (все $m$-элементные перестановки с повторениями $(c,c,..,c,0,0,\ldots ,0)$, где число $c$ встречается $b$ раз, являются его решениями).
\vskip+2mm
\begin{defn}\label{defn-2}
Уравнение (\ref{eq-4-1}) будем называть стандартным, если $n$ четное натуральное число и \[c=d_{p_1p_2\ldots p_l},\] где $p_i$ -- $\varphi$-делители $k_i$-той степени числа $n$ ($p_i^{k_i}\geq 3$) $(i=1,2,\ldots ,l)$ и $2\leq m\leq min (p_1^{k_1}-1, p_2^{k_2}-1,\ldots ,p_l^{k_l}-1)$.
\end{defn}
\vskip+3mm
Например, уравнение 
\[x_1^{2n}+x_2^{2n}=b(d_{p_1,p_2,\ldots ,p_l})^{2n},\]
где $n\in \mathbb{N}$, $b$ целое неотрицательное число, $p_1,p_2,\ldots ,p_l$ -- $\varphi$-делители любой степени числа $2n$, является стандартным (т. к. $m=2\leq p_i^{k_i}-1$ при $p_i^{k_i}\geq 3$ $(i=1,2,\ldots ,l)$). 
\vskip+3mm
Из Теоремы \ref{thm-2} следует
\begin{prop}\label{prop-3}
Если уравнение 
 \[\sum_{i=1}^{m}x_i^n=bc^n\] 
 стандартное, то \[P\left( \sum_{i=1}^{m}x_i^n=bc^n \right)=P\left( \sum_{i=1}^{m}x_i^n=b\right).\]
 \end{prop}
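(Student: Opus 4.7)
The plan is to deduce Proposition~\ref{prop-3} as an immediate specialization of Theorem~\ref{thm-2}, since the notion of a standard equation is designed precisely to repackage the hypotheses of that theorem into a single adjective.

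First I would unfold Definition~\ref{defn-2}: by assumption, $n$ is an even natural number, $c = d_{p_1 p_2 \ldots p_l}$ where each $p_i$ is a $\varphi$-divisor of $n$ of degree $k_i$ (so $\varphi(p_i^{k_i}) \mid n$ and $p_i^{k_i} \geq 3$), and $2 \leq m \leq \min(p_1^{k_1} - 1, p_2^{k_2} - 1, \ldots, p_l^{k_l} - 1)$. These are exactly the hypotheses of formula (\ref{eq-2-12}) in Theorem~\ref{thm-2}.

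Applying Theorem~\ref{thm-2} with these parameters and substituting $c$ for $d_{p_1 p_2 \ldots p_l}$ at once yields
\[
P\!\left(\sum_{i=1}^{m} x_i^n = bc^n\right) = P\!\left(\sum_{i=1}^{m} x_i^n = b\right),
\]
which is the desired conclusion. No further computation is needed.

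There is no genuine obstacle: the proposition is a verbatim reformulation of Theorem~\ref{thm-2} in the vocabulary of Definition~\ref{defn-2}. The only minor consistency check is that the parity requirement "$n$ even" in Definition~\ref{defn-2} is automatically implied by the existence of the $\varphi$-divisors $p_i$ with $p_i^{k_i} \geq 3$, because $\varphi(p^k)$ is even whenever $p^k \geq 3$, so any multiple of $\varphi(p_i^{k_i})$ is itself even; hence parity plays no independent role in the argument and the reduction to Theorem~\ref{thm-2} is clean.
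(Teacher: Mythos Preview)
Your proposal is correct and matches the paper's own argument: the paper likewise obtains Proposition~\ref{prop-3} directly from Theorem~\ref{thm-2}, with Definition~\ref{defn-2} supplying exactly the hypotheses needed for~(\ref{eq-2-12}). Your added remark about the parity of $n$ being automatic is accurate but not required for the deduction.
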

\vskip+2mm
 Отсюда следует
 \begin{thm}\label{thm-8}
 Стандартное уравнение  \[\sum_{i=1}^{m}x_i^n=bc^n\]  неразрешимо (разрешимо) тогда и только тогда, когда неразрешимо (разрешимо) уравнение  \[\sum_{i=1}^{m}x_i^n=b.\]
\end{thm}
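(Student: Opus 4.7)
The plan is to deduce Theorem 8 as an essentially immediate corollary of Proposition 3. The key conceptual translation is that a diophantine equation is unsolvable precisely when the count $P$ of non-negative integer solutions equals zero, and solvable precisely when that count is at least one. Once one has equality between two such counts, the biconditional on (un)solvability drops out automatically.

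More concretely, the first step is to verify that any standard equation, in the sense of Definition 2, satisfies the hypotheses of Theorem 2: indeed, the very definition requires $n$ even, $c = d_{p_1 p_2 \ldots p_l}$ with each $p_i$ a $\varphi$-divisor of $n$ of degree $k_i$, and $m \leq \min(p_1^{k_1}-1,\ldots,p_l^{k_l}-1)$. These are exactly the conditions needed for formula (\ref{eq-2-12}), from which Proposition 3 is merely a restatement. Thus for a standard equation we obtain
\[
P\!\left(\sum_{i=1}^{m} x_i^n = b c^n\right) \;=\; P\!\left(\sum_{i=1}^{m} x_i^n = b\right).
\]

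The second step is the logical translation. Since $P(\cdot)$ counts non-negative integer solutions, the equation $\sum x_i^n = bc^n$ is unsolvable iff $P(\sum x_i^n = bc^n) = 0$, and by the equality above this holds iff $P(\sum x_i^n = b) = 0$, i.e.\ iff $\sum x_i^n = b$ is unsolvable. Passing to the contrapositive gives the solvability half of the biconditional.

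I do not expect a genuine obstacle here: all the substantive work has already been done in Theorem 2 (and hence in Lemmas 2-1 through 2-4 and Propositions 1, 2), whose combined effect is precisely Proposition 3. The only care one needs is to make the straightforward observation that equality of solution counts entails equivalence of (un)solvability, which is a one-line remark about the counting function $P$.
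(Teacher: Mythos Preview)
Your proposal is correct and mirrors the paper's own treatment exactly: the paper simply writes ``Отсюда следует'' after Proposition~\ref{prop-3}, i.e.\ Theorem~\ref{thm-8} is presented there as an immediate corollary of the equality of solution counts. Your explicit unpacking of why equal $P$-values entail equivalence of (un)solvability is just a spelled-out version of that one-line deduction.
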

\vskip+2mm

 Резюмируя полученные результаты (Теоремы \ref{thm-3}-\ref{thm-8}),  получим следующее утверждение.
\begin{thm}\label{thm-9}
 Пусть $p$ -- простое число, $k\in \mathbb{N}$ и уравнение  \[\sum_{i=1}^{m}x_i^n=bc^n\] стандартное. Тогда в следуюших случаях оно неразрешимо:
 \vskip+1mm
 1) $m<2^n-1$ и $b\in \left[m+1; 2^n-1\right]$;
 \vskip+1mm
 2) $\varphi(p^k)\mid n$ ($p^k\geq 3$), $m<p^k-1$ и $r_{b;p^k}>m$;
  \vskip+1mm
  3) $2^k\mid n$, где $k\geq 2$, $m<2^{k+2}-1$ и $r_{b;2^{k+2}}>m$;
  \vskip+1mm
  4) $\frac{\varphi(p^k)}{2}\mid n$, $p\geq 3$, $m<\frac{p^k-1}{2}$ и $r_{b;p^k}\in \left[m+1; p^k-m-1\right]$.
  
  \end{thm}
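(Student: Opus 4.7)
The plan is to use Theorem~\ref{thm-8} as a bridge that reduces any standard equation to its simpler form $\sum_{i=1}^{m}x_i^n=b$, after which each of the four cases will become a direct invocation of one of the previously established nonrepresentability results, namely Theorems~\ref{thm-3}, \ref{thm-4}, \ref{thm-6}, or \ref{thm-7}.

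First I would apply Theorem~\ref{thm-8}: since by hypothesis the equation $\sum_{i=1}^{m}x_i^n=bc^n$ is standard, it is unsolvable in nonnegative integers if and only if $\sum_{i=1}^{m}x_i^n=b$ is unsolvable. So the proof collapses to verifying that, in each of the four listed cases, the stated conditions on $b$, $m$, $n$ already force $\sum_{i=1}^{m}x_i^n=b$ to have no nonnegative integer solution.

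Next I would dispatch the four cases by matching hypotheses to earlier theorems. Case~1) is the specialization of Theorem~\ref{thm-3} obtained by taking $l=1$, where the inequality $m<2^n-1$ and the interval $[m+1;\,2^n-1]$ appear verbatim. Case~2) is Theorem~\ref{thm-4} applied with $l=n/\varphi(p^k)\in\mathbb{N}$, which is legitimate precisely because $\varphi(p^k)\mid n$. Case~3) is Theorem~\ref{thm-6} with $l=n/2^k$. For case~4) I would set $l=2n/\varphi(p^k)$ and split on the parity of $l$: when $l$ is odd, Theorem~\ref{thm-7} applies directly with exactly the hypotheses listed; when $l$ is even, then in fact $\varphi(p^k)\mid n$, and one invokes Theorem~\ref{thm-4} instead, using the observation that $m<(p^k-1)/2$ forces $p^k-m-1>m$, so the interval $[m+1;\,p^k-m-1]$ is contained in $\{r:r>m\}$, which is the hypothesis of Theorem~\ref{thm-4}.

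The only mildly delicate point is the parity split just mentioned, which is forced by the oddness assumption in the hypothesis of Theorem~\ref{thm-7}; otherwise I expect no genuine obstacle. The theorem is essentially a consolidated restatement of Theorems~\ref{thm-3}--\ref{thm-7} viewed through the bijection on solution sets provided by Theorem~\ref{thm-8}, and the work consists entirely of checking that the hypothesis packages line up correctly in each of the four enumerated items.
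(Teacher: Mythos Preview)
Your proposal is correct and matches the paper's approach exactly: the paper's entire proof is the sentence ``Резюмируя полученные результаты (Теоремы~\ref{thm-3}--\ref{thm-8}), получим следующее утверждение'', i.e.\ it simply declares the theorem to be a summary of Theorems~\ref{thm-3}--\ref{thm-8}, which is precisely the reduction via Theorem~\ref{thm-8} followed by case-by-case invocation that you spell out. Your parity split in case~4) is in fact more careful than the paper, which does not address the oddness hypothesis in Theorem~\ref{thm-7}; your observation that the even-$l$ case falls under Theorem~\ref{thm-4} (or, equivalently, that Lemma~\ref{lem-3-2} covers it since $\{0,1\}\subset\{0,1,-1\}$) fills a small gap the paper leaves implicit.
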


  \begin{ex}\label{ex-4} \; \; \;
  
  1. Уравнение 
  \begin{equation}\label{eq-4-2}
      x_1^{2n}+x_2^{2n}=b(2^{s_1}\cdot 3^{s_2})^{2n},
  \end{equation}
  где $s_1,s_2,n\in \mathbb{N}$, стандартное (т. к.  $\varphi(2^2)\mid 2n$, $\varphi(3)\mid 2n$ и $2\equiv \min(2^2-1,3-1)$). Поэтому из Теоремы \ref{thm-9} следует, что оно неразрешимо при $b\equiv 3 \; (mod \; 4)$  (т. к. $\varphi(2^2)\mid 2$) и также при $b\in \left[3; 4^n-1\right]$.

  2. Уравнение
\[\sum_{i=1}^{m}x_i^{12}=b
(d_{3,5,7})^{12},\] 
  где $m\leq 4$, стандартное (т. к.  $\varphi(3^2)\mid 12$, $\varphi(5)\mid 12$, $\varphi(7)\mid 12$ и $4= \min(3^2-1,5-1,7-1$)). Поэтому из Теоремы \ref{thm-9} следует, что это уравнение неразрешимо при $b\in \left[m+1;2^{12}-1\right]$, а также в следующих случаях: если $r_{b;3^2}>m$ (т. к. $\varphi(3^2)\mid 12$ и $4<3^2-1$); если $r_{b;7}>m$ (т. к. $\varphi(7)\mid 12$ и $4<7-1$); если $r_{b;13}>m$ (т. к. $\varphi(13)\mid 12$ и $4<13-1$); если  $r_{b;2^4}>m$ (т. к. $2^2\mid 12$ и $4<2^4-1$). 

  3. Уравнение
\[\sum_{i=1}^{m}x_i^{30}=b
(d_{2,7,11})^{30}\] 
  где $m\leq 3$, стандартное, т. к. $\varphi(2^2)\mid 30$, $\varphi(7)\mid 30$, $\varphi(11)\mid 30$ и $3=\min(2^2-1,7-1,11-1)$. Поэтому из Теоремы \ref{thm-9} следует, что оно неразрешимо при $b\in \left[m+1;2^{30}-1\right]$, и также неразрешимо в следующих случаях: если  $r_{b;3^2}>m$ (т.к. $\varphi(3^2)\mid 30$   и $3<3^2-1$); если  $r_{b;7}>m$ (т. к. $\varphi(7)\mid 30$ и $3<7-1$); если  $r_{b;11}>m$ ( т.к. $\varphi(11)\mid 30$ и $3<11-1$); если  $r_{b;5^2}\in \left[m+1;25-m-1\right]$ (т. к. $\frac{\varphi(5^2)}{2}\mid 30$ и $3<\frac{5^2-1}{2}$); если $r_{b;31}>m$ (т. к. $\varphi(31)\mid 30$  и $3<31-1$).
  \end{ex}
\vskip+3mm

Как было отмечено выше, если $(x_1,x_2,\ldots ,x_m)$ -- решение уравнения 
\begin{equation}\label{eq-4-3}
\sum_{i=1}^{m}x_i^{n}=b,
\end{equation}
то  $(cx_1,cx_2,\ldots ,cx_m)$, где $c\in \mathbb{N}$, будет решением уравнения 
\begin{equation}\label{eq-4-4}
\sum_{i=1}^{m}x_i^{n}=bc^n.
\end{equation}
Поэтому, если   
\begin{equation}\label{eq-4-5}    P\left(\sum_{i=1}^{m}x_i^{n}=bc^n\right)=P\left(\sum_{i=1}^{m}x_i^{n}=b\right)\neq 0,
\end{equation}
то все решения уравнения (\ref{eq-4-4})  получаются из решений уравнения (\ref{eq-4-3}) умножением компонентов на $c$, и наоборот, все решения уравнения (\ref{eq-4-3}) получаются из решений уравнения (\ref{eq-4-4}) делением их компонентов на $c$. 
Отсюда следует
\begin{prop}\label{prop-4}
Если \[ P\left(\sum_{i=1}^{m}x_i^{n}=bc^n\right)=P\left(\sum_{i=1}^{m}x_i^{n}=b\right),\] где $b,c\in \mathbb{N}$, то 
\[P'\left(\sum_{i=1}^{m}x_i^{n}=bc^n\right)=P'\left(\sum_{i=1}^{m}x_i^{n}=b\right),\] где через $P'\left(f(x_1,x_2,\ldots ,x_m)=0\right)$ обозначено число натуральных решений уравнения $f(x_1,x_2,\ldots ,x_m)=0$.
\end{prop}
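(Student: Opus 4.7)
The plan is to upgrade the injection already described in the paragraph preceding the proposition into a bijection by using the finiteness of the solution sets and the hypothesis, and then to show that this bijection restricts correctly to the sets of natural solutions.

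First I would formalize the map
\[
\Phi\colon (x_1,\ldots,x_m)\longmapsto (cx_1,\ldots,cx_m),
\]
which, as noted in the text, sends every nonnegative integer solution of $\sum_{i=1}^{m}x_i^{n}=b$ to a nonnegative integer solution of $\sum_{i=1}^{m}x_i^{n}=bc^{n}$. Since $c\in\mathbb{N}$, the map $\Phi$ is injective. Both solution sets are finite (the entries are bounded by $b^{1/n}$ and $(bc^{n})^{1/n}$ respectively), so the hypothesis
\[
P\!\left(\sum_{i=1}^{m}x_i^{n}=bc^{n}\right)=P\!\left(\sum_{i=1}^{m}x_i^{n}=b\right)
\]
forces $\Phi$ to be a bijection between the nonnegative solution sets of the two equations.

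Next I would verify that $\Phi$ restricts to a bijection between the sets of natural solutions. In one direction, if $x_i\geq 1$ for all $i$, then $cx_i\geq c\geq 1$, so $\Phi$ sends natural solutions to natural solutions. In the other direction, if $(y_1,\ldots,y_m)=\Phi(x_1,\ldots,x_m)$ is a natural solution of $\sum_{i=1}^{m}y_i^{n}=bc^{n}$, then $y_i=cx_i\geq 1$, and since $x_i\in\mathbb{Z}_{\geq 0}$ and $c\in\mathbb{N}$ this forces $x_i\geq 1$, so the preimage is a natural solution. Thus the restriction of $\Phi$ to natural solutions is a bijection, and counting the two sides gives the required equality
\[
P'\!\left(\sum_{i=1}^{m}x_i^{n}=bc^{n}\right)=P'\!\left(\sum_{i=1}^{m}x_i^{n}=b\right).
\]

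There is no real obstacle here; the only point that requires a moment of care is the surjectivity of $\Phi$, which is where the hypothesis is used, together with finiteness of the solution sets. Everything else is the trivial remark that multiplying or dividing by $c\in\mathbb{N}$ preserves positivity of integer components.
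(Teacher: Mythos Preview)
Your proof is correct and follows essentially the same approach as the paper: the paragraph preceding the proposition already observes that multiplication by $c$ is an injection on nonnegative solutions and, under the hypothesis, a bijection, from which the proposition is said to follow. Your write-up simply makes explicit the two easy steps the paper leaves implicit --- that finiteness plus equal cardinalities turns the injection into a bijection, and that this bijection respects positivity of components in both directions.
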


Отсюда и из Предложения \ref{prop-3} следует
\begin{thm}\label{thm-10}
Стандартное уравнение \[\sum_{i=1}^{m}x_i^{n}=bc^n\] тогда и только тогда не имеет натуральных решений, когда уравнение \[\sum_{i=1}^{m}x_i^{n}=b\] не имеет натуральных решений. 
\end{thm}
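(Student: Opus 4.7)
The plan is to derive Theorem \ref{thm-10} as an essentially immediate consequence of Propositions \ref{prop-3} and \ref{prop-4}, which together transfer the equality of solution counts from non-negative integers to natural numbers.

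First, the standardness hypothesis lets me invoke Proposition \ref{prop-3}, which yields
\[P\left(\sum_{i=1}^{m}x_i^{n}=bc^n\right)=P\left(\sum_{i=1}^{m}x_i^{n}=b\right).\]
This is the step where the structural conditions on $c$ and $m$ actually enter: by Definition \ref{defn-2} every prime factor of $c$ is a $\varphi$-divisor of $n$ of the appropriate degree and $m\leq\min(p_i^{k_i}-1)$, so that Theorem \ref{thm-2} applies. The degenerate case $b=0$ is handled separately and trivially, since then neither equation admits any natural solution at all.

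For $b\in\mathbb{N}$ I would then feed the above equality into Proposition \ref{prop-4}, whose hypothesis is now satisfied, and conclude
\[P'\left(\sum_{i=1}^{m}x_i^{n}=bc^n\right)=P'\left(\sum_{i=1}^{m}x_i^{n}=b\right),\]
where $P'$ counts natural solutions. The biconditional in the theorem follows at once: the equation $\sum_{i=1}^{m}x_i^{n}=bc^n$ has no natural solutions if and only if $P'(\cdots\!=\!bc^n)=0$, which by the displayed equality is equivalent to $P'(\cdots\!=\!b)=0$, i.e.\ to $\sum_{i=1}^{m}x_i^{n}=b$ having no natural solutions.

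The whole argument is a two-step chain of citations, so there is no substantial obstacle to overcome. The only conceptual point worth a check is that Proposition \ref{prop-4} genuinely extracts the natural-solution equality from the non-negative one — but this is precisely its content, justified by the bijection (noted just before its statement) between the solution sets of the two equations obtained by multiplying components by $c$, which sends natural solutions to natural solutions because $c\in\mathbb{N}$.
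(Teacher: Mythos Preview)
Your proof is correct and follows essentially the same route as the paper: the paper derives Theorem \ref{thm-10} in one line from Proposition \ref{prop-3} together with Proposition \ref{prop-4}, exactly as you do. Your added care in separating out the case $b=0$ (since Proposition \ref{prop-4} is stated for $b\in\mathbb{N}$) is a minor refinement that the paper leaves implicit.
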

\vskip+3mm

Т. к. при  $0\leq b<m$ уравнение \[\sum_{i=1}^{m}x_i^{n}=b\] не имеет натуральных решений, то из Теоремы \ref{thm-10} следует
\begin{thm}\label{thm-11}
Если уравнение \[\sum_{i=1}^{m}x_i^{n}=bc^n\]  стандартное, то при $0\leq b<m$ оно не имеет натуральных решений. В частности, если уравнение
\begin{equation}\label{eq-4-6}
\sum_{i=1}^{m}x_i^{n}=c^n
\end{equation}
стандартное, то оно не имеет натуральных решений.
\end{thm}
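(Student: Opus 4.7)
План состоит в непосредственном применении Теоремы \ref{thm-10}, которая сводит вопрос о существовании натуральных решений стандартного уравнения $\sum_{i=1}^{m}x_i^{n}=bc^n$ к аналогичному вопросу для редуцированного уравнения $\sum_{i=1}^{m}x_i^{n}=b$. Таким образом, всё доказательство сводится к элементарной проверке: уравнение $\sum_{i=1}^{m}x_i^{n}=b$ не имеет натуральных решений при $0\leq b<m$.

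Сначала я бы применил Теорему \ref{thm-10} к стандартному уравнению $\sum_{i=1}^{m}x_i^{n}=bc^n$: она утверждает, что это уравнение не имеет натуральных решений тогда и только тогда, когда их не имеет уравнение $\sum_{i=1}^{m}x_i^{n}=b$. Затем я бы провёл следующее тривиальное рассуждение: если $(x_1,x_2,\ldots,x_m)$ -- натуральное решение последнего уравнения, то $x_i\geq 1$ при всех $i=1,2,\ldots,m$, откуда $x_i^n\geq 1$ и, следовательно,
\[\sum_{i=1}^m x_i^n \geq m > b,\]
что противоречит равенству. Значит, натуральных решений нет, и в силу Теоремы \ref{thm-10} их нет и у исходного стандартного уравнения.

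Второе (частное) утверждение теоремы получается подстановкой $b=1$: уравнение $\sum_{i=1}^{m}x_i^{n}=c^n$ -- это в точности случай $b=1$. Согласно Определению \ref{defn-2}, стандартное уравнение требует $m\geq 2$, так что $b=1<2\leq m$, и общий случай применим без каких-либо дополнительных условий.

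Основная содержательная работа уже проделана в Теоремах \ref{thm-1}, \ref{thm-2}, \ref{thm-8} и \ref{thm-10}, где строится нетривиальное равенство чисел решений исходного и редуцированного уравнений (как целых, так и натуральных, см.\ Предложение \ref{prop-4}). Настоящее утверждение, по существу, лишь соединяет этот мощный инструмент с тривиальной нижней оценкой $\sum_{i=1}^m x_i^n\geq m$ для натуральных $x_i$; никакого существенного препятствия здесь не возникает.
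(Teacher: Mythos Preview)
Your proposal is correct and follows essentially the same approach as the paper: the paper also derives Theorem~\ref{thm-11} directly from Theorem~\ref{thm-10} together with the elementary observation that $\sum_{i=1}^{m}x_i^{n}=b$ has no natural solutions when $0\leq b<m$. You merely spell out the trivial inequality $\sum_{i=1}^m x_i^n\geq m$ that the paper leaves implicit.
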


Из Теоремы \ref{thm-11} следует, что уравнение 
\begin{equation}\label{eq-4-8}
 x_1^{2n}+x_2^{2n}=(2^{s_1}\cdot 3^{s_2})^{2n},   
\end{equation}
где $n\in \mathbb{N}$, $s_1,s_2$ -- целые неотрицательные числа, не имеет натуральных решений.

\begin{thm}\label{thm-12}
Уравнение
\begin{equation}\label{eq-4-9}
 x_1^{n}+x_2^{n}=(p^s)^n,  
\end{equation}
где $n\geq 3$, $p$ -- простое число, $s\in \mathbb{N}$, не имеет натуральных решений. 
\end{thm}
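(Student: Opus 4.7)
The plan is to reduce the equation $x_1^n+x_2^n=(p^s)^n$ to a primitive form $a^n+b^n=p^m$ with $\gcd(a,b)=1$, $p\nmid ab$, and $m\geq n\geq 3$, and then dispose of the cases $p=2$, ($p$ odd, $n$ odd), and $n$ even separately. For the reduction, I would set $d=\gcd(x_1,x_2)$; since $d^n\mid p^{sn}$, $d$ must be a power of $p$, say $d=p^t$, and writing $x_i=p^ta_i$ gives $a^n+b^n=p^{(s-t)n}$. Coprimality forces $p\nmid ab$ (otherwise exactly one of $a,b$ is divisible by $p$, making $a^n+b^n$ coprime to $p$), and positivity of $a,b$ forces $s>t$, so $m:=(s-t)n\geq n\geq 3$.

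For $p=2$, both $a,b$ are odd. If $n$ is odd, the factorization $a^n+b^n=(a+b)Q(a,b)$ has $Q$ odd (being a sum of $n$ odd terms with alternating signs), so $v_2(a^n+b^n)=v_2(a+b)$; this forces $2^m\leq a+b\leq a^n+b^n=2^m$, with equality only at $a=b=1$, giving $m=1<3$, a contradiction. If $n$ is even, then $a^2\equiv 1\pmod 8$ for odd $a$ yields $a^n+b^n\equiv 2\pmod 8$, hence $v_2(a^n+b^n)=1<m$. For $p$ odd and $n$ odd ($n\geq 3$): since $a+b\mid a^n+b^n=p^m$, I would write $a+b=p^u$ with $u\geq 1$, apply Lifting the Exponent to get $Q(a,b)=p^{v_p(n)}$, and combine the power-mean bound $Q\geq(p^u/2)^{n-1}$ with $p^{v_p(n)}\leq n$ to obtain $n\geq(3/2)^{n-1}$, which fails for every odd $n\geq 5$. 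The residual case $n=3$ follows by direct analysis of $a^2-ab+b^2=p^{v_p(3)}$: either $p=3$, forcing $\{a,b\}=\{1,2\}$ and $m=2<3$; or $p\neq 3$, forcing $a=b=1$, so $a+b=2$ is not a power of an odd prime.

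For $n$ even, I would write $n=2^kr$ with $r$ odd and $k\geq 1$, and split according to whether $r\geq 3$ or $r=1$. If $r\geq 3$, the equation rewrites as $(x_1^{2^k})^r+(x_2^{2^k})^r=(p^{s\cdot 2^k})^r$ and reduces to the odd-exponent case already handled (with exponent $r\geq 3$ and prime-power base $p^{s\cdot 2^k}$). If $r=1$, then $n=2^k$ with $k\geq 2$, and $(x_1^{2^{k-2}})^4+(x_2^{2^{k-2}})^4=(p^{s\cdot 2^{k-2}})^4$ is an instance of $X^4+Y^4=Z^2$, ruled out by Fermat's classical infinite-descent theorem. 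The hardest step will be this last subcase, since it is the only place a non-trivial classical theorem (Fermat's for fourth powers) is invoked; all other parts rest on elementary divisibility, LTE, the power-mean inequality, and direct analysis of the small case $n=3$.
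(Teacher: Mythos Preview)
Your proposal is correct, but it takes a considerably longer route than the paper's argument. The paper also reduces to odd $n\geq 3$ via Fermat's theorem for fourth powers, but then dispatches the odd case in two lines without any primitive reduction, LTE, or power-mean bounds: from $x_1+x_2\mid x_1^n+x_2^n=p^{sn}$ one gets $x_1+x_2=p^k$ for some $k\geq 1$; but $(x_1+x_2)^n>x_1^n+x_2^n=p^{sn}$ gives $x_1+x_2>p^s$, while $x_1,x_2<p^s$ gives $x_1+x_2<2p^s\leq p^{s+1}$, so $p^s<x_1+x_2<p^{s+1}$ and no power of $p$ fits. Your approach has the merit of actually computing $v_p(a^n+b^n)$ and would generalize more readily to right-hand sides that are prime powers not of the special form $p^{sn}$; the paper's squeeze argument, by contrast, exploits precisely that the exponent on the right is a multiple of $n$, which makes the bounds $p^s$ and $p^{s+1}$ consecutive powers of $p$. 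In this specific setting the paper's trick is both shorter and avoids the separate treatment of $p=2$, the $n=3$ residual case, and the invocation of LTE.
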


\begin{proof}
Как известно \cite{BSh}, \cite{M}, \cite{P}, \cite{A}, уравнение \[x_1^{4}+x_2^{4}=z^4\] не имеет натуральных решений. Кроме того, если уравнение \[x_1^{n}+x_2^{n}=z^n\] не имеет натуральных решений, то уравнение  \[x_1^{nk}+x_2^{nk}=z^{nk}\] тоже  не будет иметь натуральных решений при $k\in \mathbb{N}$. Поэтому данную теорему достаточно доказать для нечетных $n\geq 3$. 

Допустим, что существуют $x_1,x_2\in \mathbb{N}$, удовлетворяющие  условию (\ref{eq-4-9}), где $n$  нечетное число, $n\geq 3$. Т. к.  $x_1+x_2\mid x_1^n+x_2^n$, $p$ -- простое число и  $x_1+x_2\neq 1$, то из (\ref{eq-4-9}) будет следовать, что \[x_1+x_2=p^k,\] где $k\in \mathbb{N}$. Из (\ref{eq-4-9}) следует также, что $x_1,x_2<p^s$, поэтому \[x_1+x_2<2p^s\leq p^{s+1}.\] Кроме того, т. к. из формулы бинома Ньютона следует, что  \[(x_1+x_2)^{n}>x_1^n+x_2^n,\]  то из (\ref{eq-4-9}) следует, что \[x_1+x_2>p^s.\] Таким образом, получили, что  \[p^s<x_1+x_2<p^{s+1},\] где $s\in \mathbb{N}$, откуда следует, что\[x_1+x_2\neq p^k,\] при $k\in \mathbb{N}$. Из полученного противоречия следует, что уравнение  (\ref{eq-4-9}) не имеет натуральных решений при нечетном $n\geq 3$. Таким образом, уравнение (\ref{eq-4-9}) не имеет натуральных решений при любом натуральном $n\geq 3$. 
\end{proof}

\begin{thm}\label{thm-13}
 Пусть $p$ -- любое простое число, $n>1$. Тогда уравнение 
\begin{equation}\label{eq-4-10}
    x_1^{2n}+x_2^{2n}=(p^s\cdot p_1^{s_1}p_2^{s_2}\ldots p_l^{s_l})^{2n},
\end{equation}
 где  $p_i$ -- $\varphi$-делитель любой степени числа $2n$, $s,s_i$ -- целые неотрицательные числа $(i=1,2,\ldots ,l)$, не имеет натуральных решений.   В частности, не имеет натуральных решений уравнение
\begin{equation}\label{eq-4-11}
    x_1^{2n}+x_2^{2n}=(2^{s_1}\cdot 3^{s_2}p^{s})^{2n},
\end{equation}
где  $n>1$ $(n\in \mathbb{N})$, $s_1,s_2,s$ -- целые неотрицательные числа. 
\end{thm}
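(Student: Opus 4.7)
The plan is to combine Theorem~\ref{thm-10} (reduction for standard equations) with Theorem~\ref{thm-12} (the Fermat-type statement for a prime-power right-hand side). First, separate the base of the right-hand side of (\ref{eq-4-10}) into the factor $p^s$ and the product $d=\prod_{i\colon s_i\geq 1} p_i^{s_i}$; if all $s_i=0$ set $d=1$ and skip the reduction step. The equation then reads
\[
x_1^{2n}+x_2^{2n}=(p^s)^{2n}\cdot d^{2n}.
\]

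Next I would verify that, with $b=(p^s)^{2n}$, this equation is standard in the sense of Definition~\ref{defn-2}. The exponent $2n$ is even; each prime $p_i$ appearing in $d$ is, by hypothesis, a $\varphi$-divisor of $2n$ of some degree $k_i$ with $p_i^{k_i}\geq 3$; and the size constraint $m=2\leq p_i^{k_i}-1$ is automatic from $p_i^{k_i}\geq 3$. Applying Theorem~\ref{thm-10}, equation (\ref{eq-4-10}) has a natural solution if and only if
\[
x_1^{2n}+x_2^{2n}=(p^s)^{2n}
\]
has one.

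Now I would invoke Theorem~\ref{thm-12}: since $n>1$, the exponent $2n$ is at least $4$, in particular $\geq 3$, so for $s\geq 1$ the equation $x_1^{2n}+x_2^{2n}=(p^s)^{2n}$ admits no natural solutions. If $s=0$ the right-hand side equals $1$, and no natural solutions exist for the trivial reason $x_1,x_2\geq 1 \Rightarrow x_1^{2n}+x_2^{2n}\geq 2$. Thus (\ref{eq-4-10}) has no natural solutions. The particular case (\ref{eq-4-11}) follows by choosing $p_1=2$, $p_2=3$ and noting that both are $\varphi$-divisors of $2n$ for every $n\in\mathbb{N}$, since $\varphi(2^2)=2\mid 2n$ and $\varphi(3)=2\mid 2n$.

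There is no real obstacle: the statement is essentially an assembly of previously proved facts. The only point requiring a moment's care is confirming that the intermediate equation genuinely qualifies as standard, but this reduces to the inequality $m=2\leq p_i^{k_i}-1$, which is built into the definition of a $\varphi$-divisor. All the analytic content has already been done — in Theorem~\ref{thm-10} (to strip the $\varphi$-divisor part $d^{2n}$) and in the elementary argument of Theorem~\ref{thm-12} (to rule out $x_1^{2n}+x_2^{2n}=(p^s)^{2n}$ via the divisibility $x_1+x_2\mid x_1^{2n}+x_2^{2n}$ together with the binomial inequality $(x_1+x_2)^{2n}>x_1^{2n}+x_2^{2n}$).
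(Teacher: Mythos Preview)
Your proof is correct and follows essentially the same route as the paper: reduce (\ref{eq-4-10}) to $x_1^{2n}+x_2^{2n}=(p^s)^{2n}$ using the equality of solution counts for standard equations, then invoke Theorem~\ref{thm-12}. The paper phrases the reduction via formula~(\ref{eq-2-13}) together with Proposition~\ref{prop-4}, whereas you cite the packaged Theorem~\ref{thm-10}; these are the same argument, and your version is in fact tidier in handling the edge cases $s=0$ and $d=1$.

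One small inaccuracy in your closing remark: the divisibility $x_1+x_2\mid x_1^{2n}+x_2^{2n}$ is \emph{false} for even exponent $2n$ (e.g.\ $1+2\nmid 1+4$). The proof of Theorem~\ref{thm-12} actually reduces first to an odd exponent (using the classical result for exponent~$4$ and the inheritance $n\mapsto nk$), and only then applies $x_1+x_2\mid x_1^{n}+x_2^{n}$. This does not affect your argument, since you invoke Theorem~\ref{thm-12} as a black box, but the parenthetical summary should be corrected.
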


\begin{proof} Из формулы (\ref{eq-2-14}) следует, что 
   \[ P\left(x_1^{2n}+x_2^{2n}=(p^s)^{2n}\cdot (p_1^{s_1}p_2^{s_2}\ldots p_l^{s_l})^{2n}\right)=P\left(x_1^{2n}+x_2^{2n}=(p^s)^{2n}\right).\]

\noindent  Поэтому из Теоремы \ref{thm-12} и Предложения \ref{prop-4} следует, что уравнения (\ref{eq-4-10}) и (\ref{eq-4-11}) не имеют натуральных решений.
\end{proof}

\begin{ex} 
Из вышеприведенных теорем и формул из Примера \ref{ex-2} следует, что следующие уравнения при $0\leq b<m$ (в частности, при $b=1$) не имеют натуральных решений ($n,s,k\in \mathbb{N},$ $m\geq 2)$:
\vskip+1mm
1. $\sum_{i=1}^{m}x_i^{2^kn}=b(2^{s})^{2^kn}$ при $m\leq 2^{k+1}-1$;
\vskip+1mm
2. $\sum_{i=1}^mx_i^{2\cdot 3^kn}=b(3^{s})^{2\cdot 3^kn}$ при $m\leq 3^{k+1}-1$;
\vskip+1mm
3. $\sum_{i=1}^m x_i^{4\cdot 5^kn}=b(5^{s})^{4\cdot 5^kn}$ при $m\leq 5^{k+1}-1$;
\vskip+1mm
4. $x_1^{2n}+x_2^{2n}=(d_{2,3})^{2n}$;
\vskip+1mm
5. $\sum_{i=1}^mx_i^{4n}=b(d_{2,5})^{4n}$ при $m\leq 4$;
\vskip+1mm
6. $\sum_{i=1}^mx_i^{120n}=b(d_{2,3,5,7.11.13})^{120n}$ при $m\leq 6$;
\vskip+1mm
7. $\sum_{i=1}^{m}x_i^{36}=b(d_{2,3,5,7,13,19,37})^{36}$ при $m\leq 4$
(т.к. $36$ делится на $\varphi(2^3)$, на $\varphi(3^3)$, на $\varphi(5)$, на  $\varphi(7)$, на  $\varphi(13)$, на $\varphi(19)$, на $\varphi(37)$,  и $4=\min(2^3-1, 3^3-1,5-1,7-1,13-1,19-1,37-1$);
\vskip+1mm
8. $x_1^{30}+x_2^{30}=(d_{2,3,5,7,11})^{30}$ (т.к. здесь, кроме числа $5$, числа $2, 3, 7, 11$  в правой части уравнения  являются $\varphi$-делителями числа $30$: $30$ делится на $\varphi(2^2)$, на $\varphi(3)$, на $\varphi(7)$, на $\varphi(11)$, но не делится на $\varphi(5^k)$, где  $k$ -- натуральное число).
\end{ex}
\end{fulltext}

\end{document}